\newtheorem{theorem}{Theorem}[section]
\newtheorem*{theoremp}{Theorem}
\newtheorem{lemma}[theorem]{Lemma}
\newtheorem{corollary}[theorem]{Corollary}
\newtheorem{conjecture}[theorem]{Conjecture}
\newtheorem{definition}[theorem]{Definition}
\newcommand{\R}{\mathds{R}}
\newcommand{\Z}{\mathbb{Z}}
\newcommand{\F}{\mathcal{F}}
\newcommand{\h}{{\mathds H}}
\newcommand{\tv}{{\mathds T}}
\newcommand{\cf}{{\cal F}}
\newcommand{\ck}{{\cal K}}
\DeclareMathOperator{\conv}{conv}
\title{Quantitative Tverberg theorems over lattices and other discrete sets}
\author{J. A. De Loera \and R. N. La Haye \and D. Rolnick \and P. Sober\'on}
\begin{document}

\maketitle

\abstract{
This paper presents a new variation of Tverberg's theorem.  Given a discrete set $S$ of $R^d$, we study the number of points of $S$ needed to guarantee the existence of an $m$-partition of the points such that the intersection of the $m$ convex hulls of the parts contains at least $k$ points of $S$.  The proofs of the main results require new quantitative versions of Helly's and Carath\'eodory's theorems.
}

\section{Introduction}

This year marks the 50th anniversary of Tverberg's theorem, one of the most important results in combinatorial convex geometry.

\begin{theoremp}[Tverberg 1966~\cite{Tverberg:1966tb}]
\label{thm:Tverberg}
 Let $a_{1},\ldots,a_{n}$ be points in $\R^{d}$.
If the number of points $n$ satisfies $n >(d+1)(m-1)$, then they can be partitioned into $m$ disjoint parts $A_{1},\ldots,A_{m}$ in such
a way that the $m$ convex hulls $\conv A_1, \ldots, \conv A_m$ have a point in common.
\end{theoremp}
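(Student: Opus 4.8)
The plan is to derive Tverberg's theorem from the colorful Carath\'eodory theorem of B\'ar\'any by means of Sarkaria's tensor trick, which converts a single Tverberg partition problem in $\R^d$ into a colorful selection problem in a higher-dimensional tensor space. It suffices to treat the extremal case $n = (d+1)(m-1)+1$. First I would homogenize by lifting each point to $\bar a_i = (a_i,1) \in \R^{d+1}$, and fix vectors $v_1,\dots,v_m \in \R^{m-1}$ placed as the vertices of a simplex centered at the origin, so that $\sum_{j=1}^m v_j = 0$ while every $m-1$ of them are linearly independent (the only linear dependence among the $v_j$ is the all-ones relation).

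The second step is the tensor construction. For each index $i$ and each part label $j$ I form $\bar a_i \otimes v_j$ in the space $\R^{d+1} \otimes \R^{m-1} \cong \R^{(d+1)(m-1)}$, and I group these into $n$ color classes $C_i = \{\bar a_i \otimes v_j : 1 \le j \le m\}$. Because $\sum_j \tfrac1m (\bar a_i \otimes v_j) = \bar a_i \otimes (\tfrac1m\sum_j v_j) = 0$, the origin lies in $\conv C_i$ for every $i$. The ambient dimension is $(d+1)(m-1)$, and the number of color classes is $n = (d+1)(m-1)+1$, exactly the number required to apply the colorful Carath\'eodory theorem. That theorem then yields a choice of one vector from each class, say $\bar a_i \otimes v_{\sigma(i)}$, together with coefficients $\lambda_i \ge 0$ summing to $1$, such that $\sum_i \lambda_i (\bar a_i \otimes v_{\sigma(i)}) = 0$.

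The final step is to ``de-tensor'' this identity into a genuine partition. Setting $A_j = \{i : \sigma(i) = j\}$ and $w_j = \sum_{i \in A_j}\lambda_i \bar a_i \in \R^{d+1}$, the relation becomes $\sum_j w_j \otimes v_j = 0$; reading this coordinate by coordinate in $\R^{d+1}$ and using that the only dependence among the $v_j$ is the all-ones relation forces all $w_j$ to equal a common vector $w$. The last coordinate of $\bar a_i$ is $1$, so $\sum_{i\in A_j}\lambda_i$ equals the common value $w_{d+1}$ for every $j$; since the $\lambda_i$ sum to $1$ over all $m$ parts, this common value is $1/m>0$, which in particular shows each $A_j$ is nonempty. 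Normalizing the coefficients within each part then exhibits $m\,(w_1,\dots,w_d)$ as a point common to all the convex hulls $\conv\{a_i : i \in A_j\}$, completing the proof.

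I expect the main obstacle to be the tensor setup itself: recognizing that the two seemingly unrelated bounds---the Tverberg threshold $(d+1)(m-1)+1$ and the colorful Carath\'eodory requirement of $\dim+1$ color classes---coincide exactly under the tensor embedding, and that the symmetric placement of the $v_j$ is precisely what makes the origin belong to each color class while still permitting the final de-tensoring. An alternative, Tverberg's original inductive argument, replaces this with a delicate perturbation-and-exchange scheme on candidate partitions; I would favor the Sarkaria route because its only nontrivial input is a clean, self-contained colorful Carath\'eodory statement.
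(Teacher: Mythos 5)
Your proof is correct, but there is nothing in the paper to compare it against: the statement is quoted as Tverberg's classical 1966 theorem, with no proof given --- the paper simply cites Tverberg's original article and lists the known ``simpler proofs,'' among them Sarkaria's. Your argument is exactly that Sarkaria route (in the streamlined B\'ar\'any--Onn form): the tensor trick reduces the Tverberg partition problem for $n=(d+1)(m-1)+1$ points to a colorful Carath\'eodory selection in $\R^{(d+1)(m-1)}$, and the numerology matches exactly. The details check out. The origin lies in each color class $C_i$ because $\sum_j v_j=0$; the de-tensoring step is sound because the space of linear dependences $\sum_j c_j v_j=0$ is one-dimensional, spanned by the all-ones vector, so $\sum_j w_j\otimes v_j=0$ forces all $w_j$ to coincide; and the homogenizing coordinate gives each part total weight $1/m>0$, which is what guarantees every $A_j$ is nonempty and lets you renormalize to exhibit the common point. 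It is a pleasant coincidence that the key external input, B\'ar\'any's colorful Carath\'eodory theorem, is precisely the tool the paper itself imports (in its ``very colorful'' variant) to prove its quantitative Carath\'eodory lemma, so your proof is stylistically at home here. Two small points to tighten: (i) the reduction to the extremal case $n=(d+1)(m-1)+1$ deserves one sentence --- apply the extremal case to a subset of that size and throw the leftover points into any one part, which only enlarges its hull; (ii) in the final step the notation $m(w_1,\dots,w_d)$ reuses $w_j$ for the coordinates of the common vector $w$, clashing with your earlier use of $w_j$ for the part-sums; writing the common point as $m\,\pi(w)$, where $\pi$ drops the last coordinate, avoids the collision.
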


The case of $m=2$ was proved in 1921 by Radon \cite{originalRadon} and is often referred to as Radon's theorem or Radon's lemma.  Tverberg published the general theorem in 1966 \cite{Tverberg:1966tb} and presented another proof in 1981 \cite{Tverberg1981}.  Simpler proofs have since appeared in \cite{baranyonn-colorfulLP, Roudneff, Sarkaria:1992vt}.  Chapter 8.3 of \cite{Mbook} and the expository article \cite{3nziegler} can give the reader a sense of the abundance of work surrounding this elegant theorem.

Traditionally, Tverberg-type theorems consider intersections of convex sets over $\R^d$.  In this article, we present Tverberg-type theorems where all points lie within a discrete subset $S \subset \R^d$ and the intersection of convex hulls is required to have a non-empty intersection with $S$.  Recall that a set $S$ is \emph{discrete} if every point $x\in \R^d$ has a neighborhood which intersects $S$ in a finite set.  Lattices, such as $\Z^d$, are important examples, but other more sophisticated discrete sets are also of interest (e.g., the difference  of a lattice and a sublattice or the Cartesian product of the primes numbers).

One motivation for considering discrete sets $S$ is that we are able to count the number of points of $S$ occurring within a finite set. This allows us to generalize the 
traditional Tverberg theorem to discrete \emph{quantitative} Tverberg theorems, in which the intersections of sets are constrained to contain at least a certain number of points. As an example, consider the case of $S=\Z^d$, which will follow as an easy corollary from our main result, Theorem \ref{thm:quantitative-disc-tverberg}.

\begin{corollary}[Discrete quantitative Tverberg over $\Z^d$] 
\label{cor:disc-quant-tv-Z}
Let $d$ be the dimension and $k$ a positive integer. Set $c(d,k)= (2^d-2)\left\lceil \tfrac23(k+1)\right\rceil+2$.  
Then, any set of at least $c(d,k) (m-1)kd+k$ integer lattice points in $\Z^d$ can be partitioned into $m$ disjoint subsets such 
that the intersection of their convex hulls contains at least $k$ integer lattice points.
\end{corollary}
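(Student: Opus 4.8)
The plan is to read this off from the main result, Theorem~\ref{thm:quantitative-disc-tverberg}, which I expect to bound the number of points of a general discrete set $S$ needed to force a quantitative Tverberg partition purely in terms of a single invariant of $S$: its quantitative Helly number. Concretely, call $c_S(d,k)$ the smallest integer $c$ with the property that for every finite family of convex sets in $\R^d$, if each subfamily of at most $c$ of them has at least $k$ points of $S$ in its intersection, then the entire family does too. The general theorem should then state that any collection of at least $c_S(d,k)(m-1)kd + k$ points of $S$ can be $m$-partitioned so that the intersection of the $m$ convex hulls carries at least $k$ points of $S$. Granting this, the corollary is obtained by specializing $S = \Z^d$, so the only thing left to pin down is the value of $c_{\Z^d}(d,k)$.

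First I would check that $\Z^d$ meets the discreteness requirement of the general theorem; this is immediate, since every ball in $\R^d$ contains only finitely many integer points. Next, and this is the substantive input, I would invoke the quantitative Doignon--Bell--Scarf theorem for the integer lattice, the quantitative Helly-type statement flagged in the abstract, which gives
\[
  c_{\Z^d}(d,k) \;\le\; (2^d-2)\left\lceil \tfrac23(k+1)\right\rceil + 2 \;=\; c(d,k).
\]
Since the threshold $c_S(d,k)(m-1)kd+k$ is increasing in $c_S(d,k)$, a set meeting the larger bound $c(d,k)(m-1)kd+k$ \emph{a fortiori} meets the exact threshold, so Theorem~\ref{thm:quantitative-disc-tverberg} applies and delivers the required $m$-partition, with at least $k$ lattice points in the common intersection of the hulls.

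Putting these together yields precisely the stated count. The genuine difficulty is not in the corollary, which is a direct substitution, but has been front-loaded into the two results it rests on: the general discrete quantitative Tverberg theorem, and the determination of the quantitative Helly number of $\Z^d$. I would expect the latter to be the main technical obstacle, as it is exactly the point where the quantitative refinements of Helly's and Carath\'eodory's theorems announced in the abstract must be established; by contrast, once the sharp lattice constant $c(d,k)$ is available, the passage from the general theorem to Corollary~\ref{cor:disc-quant-tv-Z} is essentially a one-line specialization.
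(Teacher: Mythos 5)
Your proposal is correct and follows essentially the same route as the paper: Corollary~\ref{cor:disc-quant-tv-Z} is obtained by specializing Theorem~\ref{thm:quantitative-disc-tverberg} to $S=\Z^d$ and inserting the externally cited bound $\h_{\Z^d}(k)\le (2^d-2)\left\lceil \tfrac23(k+1)\right\rceil+2 = c(d,k)$ on the quantitative lattice Helly number (the quantitative Doignon-type theorem of Aliev et al.), with the same \emph{a fortiori} monotonicity observation you make. You also correctly locate where the real work lies (the general Tverberg theorem and the lattice Helly bound), which matches the paper's treatment of the corollary as a one-line substitution.
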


For example, in the usual (real-valued) Tverberg theorem, if one has seven points with real coordinates in the plane, they can be partition into three disjoint parts such that their convex hulls intersect in at least one (real-valued) point. In our setting, if one takes 25 lattice points in $\Z^2$, there is always a $3$-partition of them, so that the three 
convex hulls intersect, and their intersection contains at least one lattice point. Prior work on this type of statement was pioneered in \cite{Eckhoff:2000jw,Jamison:1981wz, onn+radon}. To state our main results formally, we must define the (quantitative) Tverberg and Helly numbers over discrete sets $S$.

\begin{definition}
Given a discrete subset $S$ of $\R^d$, the \emph{quantitative $S$-Tverberg number} $\tv_{S}(m,k)$ (if it exists) is the smallest positive integer with the following property. For any $\tv_{S}(m,k)$ distinct points in $S\subseteq\R^d$, there is a partition of them into $m$ sets $A_1,A_2,\dots,A_m$ such that the intersection of their convex hulls contains at least $k$ points of $S$.

If no such integer exists, we say that $\tv_S(m,k)=\infty$.
\end{definition}

Recall now the classical theorem of Helly. It says that given $\cf$, a finite family of convex sets of $\R^d$, If $\bigcap \ck \neq
\emptyset$ for all $\ck \subset \cf$ of cardinality at most $d+1$, then $\bigcap \cf \neq \emptyset$. In this case $d+1$ is the Helly number of
the space $\R^d$. More generally one can define.

\begin{definition}
Given a discrete set $S \subset \R^d$, the \emph{quantitative $S$-Helly number} $\h_S(k)$ (if it exists) is the smallest positive integer with the following property. Suppose that $\mathcal F$ is a finite family of convex sets in $\R^d$, and that $\bigcap\mathcal G$ intersects $S$ in at least $k$ points for every subfamily $\mathcal G$ of $\mathcal F$ having at least $\h_S(k)$ members. Then $\bigcap\mathcal F$ intersects $S$ in at least $k$ points.

If no such integer exists, we say that $\h_S(k)=\infty$.
\end{definition}

For simplicity, we write $\tv_S(m)$ and $\h_S$ for $\tv_S(m,1)$ and $\h_S(1)$, respectively.  Note that for $k\ge 1$, the definitions of $\tv_{S}(m,k)$ and $\h_S(k)$ make sense only because $S$ is discrete.  As our main result, we prove that the existence of $S$-Tverberg numbers is a consequence of the existence of $S$-Helly numbers, and that these numbers are related.  Our main theorem below is proven in Section \ref{section-Tverberg}. As we see later,
to prove our Tverberg-type results, we will need discrete generalizations of Carath\'eodory's theorem and Helly's theorem. 

\begin{theorem}[Discrete quantitative Tverberg] \label{thm:quantitative-disc-tverberg}
Let $S\subseteq \R^d$ be discrete with finite quantitative Helly number $\h_S(k)$. Let $m,k$ be integers with $m,k\ge 1$. Then, we have
\[
\tv_S(m,k)\leq \h_S(k)(m-1)kd+k.
\]
\end{theorem}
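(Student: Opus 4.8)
The plan is to reduce the quantitative Tverberg statement to the quantitative Helly theorem by a counting/averaging argument over partitions, in the spirit of the standard "Tverberg from Helly" reductions, but adapted to the requirement that the common intersection contain at least $k$ points of $S$. Suppose we are given $N = \h_S(k)(m-1)kd + k$ distinct points of $S$. For each $m$-partition $\Pi = (A_1,\dots,A_m)$ of (a subset of) these points, form the family $\mathcal F_\Pi = \{\conv A_1,\dots,\conv A_m\}$ of $m$ convex sets. What we want is a single partition for which $\bigcap \mathcal F_\Pi$ meets $S$ in at least $k$ points. The quantitative Helly number $\h_S(k)$ tells us that to certify such an intersection, it is enough to certify it for every subfamily of size $\h_S(k)$; but since $\mathcal F_\Pi$ has only $m$ members, I expect the real use of Helly to be at the level of constructing the parts themselves, not at the level of the final $m$-fold intersection.

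So the approach I would actually pursue is to build the partition greedily, one "witness point" of $S$ at a time, using a Radon/Tverberg-free core plus Helly to control how many input points each stage consumes. Concretely, I would first prove the statement for $k=1$ and general $m$, then bootstrap to general $k$. For $k=1$: I want to partition so that the $m$ convex hulls share one lattice point. The natural mechanism is to invoke a discrete Carathéodory-type theorem (the excerpt explicitly says such a result will be needed) to show that each part $A_i$ can be taken to have bounded size, roughly $\h_S \cdot d$, while still forcing its convex hull to contain a prescribed point of $S$; summing over the $m$ parts and accounting for the $k$ target points gives the bound $\h_S(m-1)d + 1$, which matches the claimed formula at $k=1$. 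The quantity $(m-1)$ rather than $m$ appears because one part can be left to "absorb" the remaining points, exactly as in the classical Tverberg count.

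To get general $k$, I would iterate: peel off $k$ target points $p_1,\dots,p_k \in S$ to be captured in the common intersection, and for each target require all $m$ hulls to contain it. The quantitative Helly number $\h_S(k)$ enters precisely here — it converts the "all $m$ hulls contain at least $k$ common points of $S$" requirement into a statement checkable on bounded subfamilies, and correspondingly bounds by $\h_S(k) \cdot d$ the number of input points needed per part to force the hull to contain a designated $k$-point configuration of $S$ (via a quantitative Carathéodory theorem). Multiplying: $m-1$ "active" parts, each costing $\h_S(k) k d$ points, plus the $k$ points that form the guaranteed intersection, yields $\h_S(k)(m-1)kd + k$. The bookkeeping is to verify that the parts produced this way are genuinely disjoint and that the leftover points can be distributed without breaking any containment.

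The main obstacle I anticipate is the disjointness and the simultaneous-containment coordination. It is easy to force one convex hull to contain a given $k$ points of $S$ using few input points, but making all $m$ hulls contain the same $k$ points of $S$ while the $m$ parts are pairwise disjoint is the crux; a naive greedy selection could reuse points across parts. I expect the clean way around this is the factorization "quantitative Carathéodory gives small certificates per part, $\h_S(k)$ guarantees the target $k$-set is realizable as a common intersection, and a counting argument over the $N$ available points shows there is enough room to assign disjoint witness sets to all $m-1$ active parts." Getting the constant to come out as exactly $\h_S(k)(m-1)kd$ rather than something larger will require the Carathéodory bound to be $\h_S(k)\,d$ per target point per part, so pinning down the sharp discrete Carathéodory estimate is where I would concentrate the technical effort.
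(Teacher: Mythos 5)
There is a genuine gap, and it sits exactly where you admit the crux lies. Your proposal never supplies a mechanism for producing the $k$ points of $S$ that are to lie in the common intersection; you say that $\h_S(k)$ ``guarantees the target $k$-set is realizable as a common intersection,'' but Helly is a statement about families of convex sets, and you never specify a family to apply it to. The paper's central move, which is missing from your plan, is to apply the quantitative Helly number to the family $\mathcal F$ of convex hulls of \emph{all} subsets $B\subset A$ with $|B|=(\h_S(k)-1)(m-1)kd+k$. Each such hull omits at most $(m-1)kd$ points of $A$, so any $\h_S(k)$ members of $\mathcal F$ jointly omit at most $\h_S(k)(m-1)kd$ points; since $|A|=\h_S(k)(m-1)kd+k$, every such subfamily already contains $k$ points of $A\subseteq S$ in its intersection, and Helly then yields $k$ points $P\subseteq S$ common to \emph{all} of $\mathcal F$. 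This is equivalent to saying $P$ has depth at least $(m-1)kd+1$ with respect to $A$ (any halfspace containing a point of $P$ and too few points of $A$ would leave room for a member of $\mathcal F$ in its complement). Only after this step does a greedy extraction of disjoint parts, of the kind you describe, make sense: the depth budget is what guarantees disjoint witness sets exist for all $m$ parts.

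The second problem is that you misattribute the factor $\h_S(k)$ in the final bound. You place it in the per-part Carath\'eodory cost, positing a ``sharp discrete Carath\'eodory estimate'' of $\h_S(k)\,d$ input points per target point per part, and you plan to concentrate your technical effort on proving that. No such estimate is needed, and it is not where the Helly number acts: the correct per-part cost is $kd$, because $\conv(P)$ has at most $k$ vertices and the colorful quantitative Carath\'eodory lemma (proved in the paper from B\'ar\'any's very colorful Carath\'eodory theorem) captures a polytope with $n$ vertices using $d$ points per vertex --- no Helly number appears. Indeed, over $S=\Z^d$ capturing a single lattice point in a hull needs only $d+1$ points by ordinary Carath\'eodory, while $\h_{\Z^d}(1)=2^d$, so a bound of $\h_S(k)d$ per target per part is not the right statement to chase. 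Your factorization $(m-1)\cdot\h_S(k)kd + k$ reproduces the paper's number only by coincidence: if the parts genuinely cost $\h_S(k)kd$ points each, the depth you would need for $P$ would be $\h_S(k)(m-1)kd+1$, and the Helly counting above would then force a starting set of size roughly $\h_S(k)^2(m-1)kd+k$ --- an extra factor of $\h_S(k)$. The correct division of labor is: Helly (applied to hulls of all large subsets) contributes the factor $\h_S(k)$ and the depth; Carath\'eodory contributes only the factor $kd$ per part.
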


Corollary \ref{cor:disc-quant-tv-Z} follows from Theorem \ref{thm:quantitative-disc-tverberg}, by setting $S=\Z^d$ and applying the results in result of Averkov et al.~\cite{averkov2016}, as will be discussed further in Section \ref{section-helly}.  The special case of $S=\Z^d$ and $k=1$ was considered previously in Eckhoff \cite{Eckhoff:2000jw} and in the case of $m=2$ (Radon partitions) by Onn \cite{onn+radon}.  Eckhoff described the bounds $$2^d (m-1) < \tv_{\Z^d}(m) \le (m-1)(d+1)2^d - d - 2,$$ where the upper bound follows by combining a theorem of Jamison for general convexity spaces \cite{Jamison:1981wz} with \cite{Doi1973}.  Our work improves slightly upon this bound, giving us:
$$\tv_{\Z^d}(m) \le (m-1)d 2^d +1.$$

For general $S$ and $k=1$, we have no enumeration and care only about a non-empty intersection over $S$.  In this case, the condition that $S$ be discrete is unnecessary in the definition of $S$-Helly and $S$-Tverberg numbers, and the conclusion of Theorem \ref{thm:quantitative-disc-tverberg} holds for \emph{any} subset $S\subseteq\R^d$.  We obtain the following corollary, using bounds on $S$-Helly numbers provided in \cite{AW2012,queretaro}.

\begin{corollary}[$S$-Tverberg numbers for interesting families]
The following bounds on Tverberg numbers hold:
\begin{itemize}
\item When $S=\Z^{d-a}\times\R^a$, we have $\tv_S(m)\leq (m-1)d (2^{d-a}(a+1))+1.$ 

\item Let $L',L''$ be sublattices of a lattice $L \subset\R^d$. Then, if $S=L \setminus (L' \cup L'')$, the Tverberg number satisfies $\tv_S(m)\leq 6(m-1)d 2^d+1$.

\item If $S\subseteq\R^d$ is a ${\mathbb Q}$-module, then we have $\tv_S(m)\le 2(m-1)d^2+1$.
\end{itemize}
\end{corollary}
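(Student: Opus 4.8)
The plan is to obtain all three bounds as immediate instances of Theorem~\ref{thm:quantitative-disc-tverberg} specialized to $k=1$, combined with the appropriate $S$-Helly numbers $\h_S=\h_S(1)$ for each family. Because $k=1$, the observation made just before the corollary applies---discreteness of $S$ is not needed---so Theorem~\ref{thm:quantitative-disc-tverberg} remains valid for the two non-discrete sets $\Z^{d-a}\times\R^a$ and the ${\mathbb Q}$-module. Setting $k=1$ in the theorem gives the single master inequality
\[
\tv_S(m)=\tv_S(m,1)\le \h_S\,(m-1)d+1,
\]
and every claimed bound then reduces to inserting the correct value of $\h_S$.

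First I would handle $S=\Z^{d-a}\times\R^a$, where the input is the mixed-integer Helly number $\h_S=2^{d-a}(a+1)$ established by Averkov and Weismantel~\cite{AW2012}; substituting into the master inequality gives $\tv_S(m)\le (m-1)d\,(2^{d-a}(a+1))+1$. Next, for $S=L\setminus(L'\cup L'')$ with $L',L''$ sublattices of a lattice $L$, I would use the bound $\h_S\le 6\cdot 2^d$ from~\cite{queretaro}, which yields $\tv_S(m)\le 6(m-1)d\,2^d+1$. Finally, for a ${\mathbb Q}$-module $S\subseteq\R^d$ the relevant Helly number is $\h_S=2d$, again from~\cite{queretaro}; inserting this gives $\tv_S(m)\le 2d(m-1)d+1=2(m-1)d^2+1$.

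Since Theorem~\ref{thm:quantitative-disc-tverberg} is already proved, no further geometric argument is needed: each bullet is a one-line substitution into the master inequality. The only point requiring care---and hence the main (minor) obstacle---is verifying that the Helly numbers quoted from~\cite{AW2012} and~\cite{queretaro} are stated in exactly the normalization of our definition of the quantitative $S$-Helly number $\h_S$, so that the constants $2^{d-a}(a+1)$, $6\cdot 2^d$, and $2d$ transfer without an off-by-one shift. Once that bookkeeping is confirmed, the three bounds follow at once.
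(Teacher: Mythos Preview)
Your proposal is correct and matches the paper's approach exactly: the corollary is stated immediately after Theorem~\ref{thm:quantitative-disc-tverberg} as a direct consequence of the $k=1$ case, using the $S$-Helly numbers from \cite{AW2012,queretaro}, and the paper explicitly notes (as you do) that for $k=1$ discreteness of $S$ is not required. There is no separate proof in the paper beyond this substitution, so your write-up is precisely what is intended.
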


In Section \ref{section-helly}, we discuss several new results on discrete quantitative Helly numbers, including the following:

\begin{theorem}[Discrete quantitative Helly and Tverberg numbers for differences of lattices] \label{quantitative-discrete-doignon} 
Let $L$ be a lattice in $\R^d$ and let $L_1,\dots,L_m$ be $m$ sublattices of $L$.
 Let $S=L \setminus (L_1\cup\dots\cup L_m)$ and $r=\textnormal{rank}(L)$. Then the quantitative $S$-Helly number $\h_S (k)$ exists and is bounded above by $\left(2^{m+1}k+1\right)^r$. Thus, by Theorem \ref{thm:quantitative-disc-tverberg}, we have:$$\tv_S(m,k)\leq \left(2^{m+1}k+1\right)^r(m-1)kd+k.$$
\end{theorem}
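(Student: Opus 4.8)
The plan is to prove the Helly estimate $\h_S(k)\le(2^{m+1}k+1)^r$; the stated bound on $\tv_S(m,k)$ is then immediate from Theorem~\ref{thm:quantitative-disc-tverberg}, whose hypotheses are met because $S\subseteq L$ is discrete and, as we will have shown, has finite quantitative Helly number. So I concentrate on the Helly number. First I would reduce to a full-rank lattice: since $L$ has rank $r$ and contains the origin, its span $H=\aff(L)$ is an $r$-dimensional subspace containing $S$, and for every finite family $\cf$ of convex sets in $\R^d$ we have $\bigcap\cf\cap S=\bigcap_{F\in\cf}(F\cap H)\cap S$ with each $F\cap H$ convex in $H\cong\R^r$. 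Thus the quantitative $S$-Helly number is the same whether computed in $\R^d$ or in $\R^r$, and after fixing a lattice isomorphism I may assume $L=\Z^r\subseteq\R^r$ and $S=\Z^r\setminus(L_1\cup\dots\cup L_m)$.

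Next I would pass to a purely combinatorial statement by a Radon-type witness argument, exactly the sort of discrete Radon/Carath\'eodory tool the introduction promises. It suffices to bound the size $n$ of a minimally $k$-deficient family $\cf=\{F_1,\dots,F_n\}$, i.e.\ one with $|\bigcap\cf\cap S|<k$ but $|\bigcap_{i\neq j}F_i\cap S|\ge k$ for every $j$. For each $j$ pick a witness $p_j\in(\bigcap_{i\neq j}F_i)\cap S$ with $p_j\notin F_j$, so that $p_i\in F_j$ precisely when $i\neq j$. A short computation shows that for any splitting of the index set into two blocks $A\sqcup B$, every point of $\conv\{p_i:i\in A\}\cap\conv\{p_i:i\in B\}$ lies in all of the $F_\ell$, hence in $\bigcap\cf$. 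Therefore it is enough to prove the quantitative Radon statement: any $(2^{m+1}k+1)^r$ points of $S$ split into two parts whose convex hulls meet in at least $k$ points of $S$. Such a splitting of the witnesses would put $k$ points of $S$ into $\bigcap\cf$, contradicting deficiency and forcing $n<(2^{m+1}k+1)^r$.

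To prove the Radon statement I would induct on the number $m$ of deleted sublattices, peeling off one sublattice at the cost of doubling the quantitative parameter. In the base case $S=\Z^r$ the target $\h_{\Z^r}(k)\le(2k+1)^r$ would come from a pigeonhole over the $(2k+1)^r$ residue classes modulo $2k+1$: the modulus is chosen so that a coincidence of residues among the points yields a progression of collinear lattice points realizing the Radon overlap and containing at least $k$ of them. For the inductive step one deletes $L_m$ and argues that a family failing the Radon property for $S=\Z^r\setminus(L_1\cup\dots\cup L_m)$ with parameter $k$ fails it for $\Z^r\setminus(L_1\cup\dots\cup L_{m-1})$ with parameter $2k$, once the lattice points lost to $L_m$ are accounted for; this replaces the base $2^{m}k$ by $2^{m+1}k$ and yields the claimed exponential factor. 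Equivalently, the whole estimate can be packaged as a single pigeonhole modulo $q=2^{m+1}k+1$.

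The main obstacle is precisely this sublattice avoidance inside the counting. Once the pigeonhole produces a structured family of lattice points (collinear, or boxed after rescaling by the modulus), one must guarantee that at least $k$ of them actually lie in $S$, that is, avoid the forbidden union $L_1\cup\dots\cup L_m$. Bounding, uniformly over sublattices of every possible rank, how many of the constructed points each $L_i$ can swallow is the delicate heart of the proof and is exactly what dictates the base $2^{m+1}k+1$ rather than a smaller value. With that in hand the bound on $\h_S(k)$ follows, and substituting it into Theorem~\ref{thm:quantitative-disc-tverberg} gives $\tv_S(m,k)\le(2^{m+1}k+1)^r(m-1)kd+k$ as stated.
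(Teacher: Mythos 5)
Your first reduction is sound as far as it goes: for a minimally $k$-deficient family the witnesses $p_j\in\bigl(\bigcap_{i\neq j}F_i\bigr)\cap S$ with $p_j\notin F_j$ exist, and the Levi-type computation showing that any point of $\conv\{p_i:i\in A\}\cap\conv\{p_i:i\in B\}$ for disjoint blocks $A,B$ lies in $\bigcap\cf$ is correct. The genuine gap comes immediately after: the pigeonhole modulo $n=2^{m+1}k+1$ does \emph{not} prove your quantitative Radon statement. What the pigeonhole yields is two of your points, say $p$ and $p'$, congruent modulo $n$, hence a progression of $n+1$ collinear lattice points inside the single segment $[p,p']$. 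Those points lie in $\conv\{p,p'\}$, which is not the intersection of the convex hulls of two disjoint blocks ($\{p\}$ and $\{p'\}$ themselves have disjoint hulls, and no other partition is exhibited). They are not even guaranteed to lie in the weaker Hoffman intersection $\bigcap_{j}\conv\{p_i:i\neq j\}$ that a witness argument could get away with: if $p=p_{j_0}$, then for $j=j_0$ the segment $[p,p']$ need not be contained in $\conv\{p_i:i\neq j_0\}$. In short, the pigeonhole proves only that a large subset of $S$ is not \emph{$k$-hollow} (it has $\geq k$ points of $S$ in its hull off its vertex set), and the implications run the wrong way for you: a quantitative Radon partition implies failure of the $k$-Hoffman property, which implies failure of $k$-hollowness, while your outline needs the strongest of the three and proves only the weakest. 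The paper bridges exactly this gap with two lemmas you have no analogue of: Lemma \ref{qHoffman}, which is the witness argument aimed at the Hoffman intersection rather than at Radon partitions, and, crucially, Lemma \ref{qScarf}, a descent induction over a partial order (by inclusion of convex hulls) on finite subsets of $S$, showing that the largest $k$-Hoffman set is no larger than the largest $k$-hollow set. Without that descent step your argument cannot close, and it is worth noting that known bounds on lattice Radon numbers are themselves derived from Helly-type results, so deriving Helly from Radon inverts the tractable direction.

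Separately, your inductive step on $m$ (peeling off one sublattice while doubling $k$) is asserted rather than proved, and you yourself defer the sublattice-avoidance counting as ``the delicate heart'' without supplying it. The paper does this non-inductively, inside the single progression $z_0,\dots,z_n$ produced by the pigeonhole: both endpoints lie in $S$, and the divisibility observation that $z_j$ and $z_{j+\ell}$ cannot both lie in the same $L_i$ when $\ell\mid j$ (else $z_0\in L_i$), applied to indices offset by powers of $2$ within windows of length $2^m$, extracts at least $k$ interior points of the progression avoiding all $m$ sublattices. That endpoint-anchored divisibility trick is the actual content that dictates the base $2^{m+1}k+1$; the transfer ``fails for parameter $2k$ after deleting $L_m$'' has no justification as stated.
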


Earlier versions of our results were presented in \cite{ouroriginalpaper} which now has been divided in final form in the present paper and in a separate paper  \cite{continuousquant}, where we present similar results but regarding \emph{continuous} quantitative combinatorial convexity theorems (e.g., regarding volumes, instead of counting discrete points).  Additional quantitative results regarding the intersection structure of families of convex sets are shown by Rolnick and Sober\'on in \cite{Sob15}.  These are closely related to the contributions of this paper and include versions of the $(p,q)$ theorem of Alon and Kleitman \cite{Alon92pq} and the fractional Helly theorem of Katchalski and Liu \cite{Kat79frac}.

\section{Discrete quantitative Helly numbers}\label{section-helly}

Helly's theorem and its numerous extensions are of central importance in discrete and computational geometry (see
\cite{amenta2015helly,DGKsurvey63,Eckhoffsurvey93,Wen1997}).  Doignon was the first to calculate the $L$-Helly 
number for an arbitrary lattice $L$, which has since been much studied by researchers (see e.g., \cite{Bell:1977tm,Sca1977,Hoffman:1979ix,clarkson}).

\begin{theoremp}[Doignon~\cite{Doi1973}] 
Let $L$ be a rank $d$ lattice inside $\R^d$. Then, $\h_L$ exists and is at most $2^d$.
\end{theoremp}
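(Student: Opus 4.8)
The plan is to reduce to the case $L=\Z^{d}$ and then prove the equivalent ``minimal infeasible family'' reformulation by a pigeonhole argument modulo $2$, where the bound $2^{d}=|\Z^{d}/2\Z^{d}|$ enters. Since any rank-$d$ lattice $L\subset\R^{d}$ is the image of $\Z^{d}$ under an invertible linear map $T$ (its basis matrix), and $T$ carries convex sets to convex sets and $L$-points to $\Z^{d}$-points bijectively, it suffices to prove $\h_{\Z^{d}}\le 2^{d}$. Unwinding the definition of the Helly number, this is equivalent to the statement that whenever $\mathcal F$ is a finite family of convex sets with $\bigcap\mathcal F\cap\Z^{d}=\emptyset$, some subfamily of at most $2^{d}$ members already has lattice-free intersection. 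First I would pass to a subfamily $K_{1},\dots,K_{n}$ that is \emph{minimal} with lattice-free intersection, so that deleting any single $K_{i}$ leaves a lattice point in the intersection; the goal becomes $n\le 2^{d}$.

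For each $i$, minimality provides a witness $x^{(i)}\in\Z^{d}$ lying in $\bigcap_{j\ne i}K_{j}$ but not in $K_{i}$ (otherwise $x^{(i)}$ would lie in the full intersection). The key step is to show the $n$ witnesses occupy distinct residue classes of $\Z^{d}/2\Z^{d}$, which immediately forces $n\le 2^{d}$. Suppose not: then some $x^{(i)}\equiv x^{(i')}\pmod 2$ with $i\ne i'$, so their midpoint $y=\tfrac12(x^{(i)}+x^{(i')})$ is again a lattice point. By convexity $y\in K_{j}$ for every $j\ne i,i'$, since both endpoints lie in such a $K_{j}$. It remains to locate $y$ relative to the two exceptional sets $K_{i},K_{i'}$. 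To control this I would not take arbitrary witnesses but choose them extremally: after reducing to the case where each $K_{i}$ is a halfspace $H_{i}=\{\langle a_{i},x\rangle\le b_{i}\}$, pick the tuple $(x^{(1)},\dots,x^{(n)})$ minimizing the total violation $\sum_{i}(\langle a_{i},x^{(i)}\rangle-b_{i})$. Since $x^{(i')}\in H_{i}$, one computes $\langle a_{i},y\rangle\le b_{i}+\tfrac12(\langle a_{i},x^{(i)}\rangle-b_{i})$, so if $y$ violates only the single constraint $i$ it is a strictly better witness for index $i$, contradicting minimality; and if $y$ violates neither $i$ nor $i'$ then $y$ lies in the full intersection, contradicting lattice-freeness.

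The main obstacle is the remaining case, in which the midpoint $y$ violates \emph{both} exceptional constraints $i$ and $i'$ simultaneously, so that $y$ is neither feasible nor a legal witness for either index; ruling this out is the heart of the argument. I would handle it either by a general-position/perturbation argument on the data $(a_{i},b_{i})$ guaranteeing that no lattice midpoint sits just outside two facets at once, or by refining the extremal selection (for example a lexicographic minimization rather than a total-violation one) so that the double-violation configuration always admits a local improvement. The secondary technical points --- justifying the reduction from arbitrary convex sets to finitely many halfspaces, and handling unbounded intersections so that the extremal witnesses exist --- I would dispatch by a standard compactness argument together with the observation that, within the finite minimal family, only finitely many lattice points are relevant.
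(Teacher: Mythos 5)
Your setup is fine --- the reduction to $\Z^d$, the passage to a minimal lattice-point-free family, the witnesses $x^{(i)}$, the mod-$2$ pigeonhole, and the single-violation swap under total-violation minimization are all correct --- and you have correctly located the crux: the case where the midpoint $y$ violates \emph{both} exceptional constraints. But your proposal does not resolve that case, and neither of the two suggested fixes can be made to work as stated. Double violation is an open condition (two strict inequalities), so it is robust under perturbation of the data $(a_i,b_i)$; moreover the witnesses are determined by the constraints, so you cannot arrange the data in advance so that extremal witnesses avoid the configuration. As for a lexicographic refinement, the difficulty is not the tie-breaking rule: in the double-violation case $y$ fails two constraints and hence is not a legal witness for \emph{any} index, so no swap is available to improve any potential function of the witness tuple. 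There are also secondary flaws: your extremal witnesses need not exist, since for irrational normals $a_i$ the values $\langle a_i,x\rangle$ over lattice points in an unbounded region need not attain their infimum, and ``only finitely many lattice points are relevant'' is not justified; and the reduction from convex sets to halfspaces is not a compactness statement --- the correct move is to \emph{shrink} each $K_i$ to the polytope $\conv\{x^{(j)}: j\neq i\}$, since enlarging sets to halfspaces can destroy lattice-freeness of the intersection.

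The known proofs --- including the machinery this paper develops for the quantitative case --- get around the double-violation problem by changing what the parity argument is applied to. In the paper's framework (Lemmas \ref{qHoffman} and \ref{qScarf}, which for $k=1$ recover Hoffman's characterization of Helly numbers), one first replaces each $K_i$ by the polytope spanned by the other witnesses, and then proves, by a well-founded induction on finite point sets ordered by inclusion of convex hulls (using the counting function $m(q)$), that the largest ``Hoffman'' set has the same cardinality as the largest \emph{hollow} set: a set $P\subset\Z^d$ such that every lattice point of $\conv(P)$ is a vertex of $\conv(P)$. For hollow sets the mod-$2$ argument closes with no case analysis at all: if $|P|>2^d$, two points of $P$ share a residue class mod $2$, and their midpoint is a lattice point of $\conv(P)$ that is a proper convex combination of two distinct points of $\conv(P)$, hence not a vertex --- contradiction. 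Membership of the midpoint in particular members of the family never arises; that is precisely the content your argument is missing, and it is supplied by the nontrivial induction of Lemma \ref{qScarf}, not by a cleverer extremal choice of witnesses.
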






In a related result, it was shown in \cite{AW2012} that $\h_{\Z^{a} \times \R^{b}}=(b+1)2^{a}$. 
Most relevant for the present work are results in \cite{queretaro} generalizing Doignon's theorem to discrete sets that are not lattices.

\begin{theoremp}[De Loera et al.~\cite{queretaro}] 
Let $L$ be a lattice in $\R^d$ and let $L_1,\dots,L_m$ be $m$ sublattices of $L$.  Let $R_m$ be the Ramsey number $R(3,3,\dots,3)$, i.e., the minimum number of vertices needed to guarantee the existence of a monochromatic triangle in any edge-coloring, using $m$ colors, of the complete graph $K_{R_m}$. Then the set $S = L \setminus (L_1\cup\dots\cup L_k)$ satisfies $\h_S\le (R_m-1)2^d$.

\end{theoremp}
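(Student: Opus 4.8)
The plan is to bound the size of an inclusion-minimal $S$-infeasible family and then feed it through Doignon's theorem and Ramsey's theorem. Since the Helly number is an affine invariant and only the rank of $L$ matters, I would first intersect every convex set with $\aff(L)$ and fix an affine isomorphism carrying $L$ onto $\Z^d$, so that we may assume $L=\Z^d$ and $S=\Z^d\setminus(L_1\cup\dots\cup L_m)$. By the standard reformulation of Helly numbers (the Helly number equals the largest size of an inclusion-minimal infeasible family), it suffices to show that every finite family $\mathcal G=\{C_1,\dots,C_n\}$ of convex sets which is inclusion-minimal with respect to $\bigcap\mathcal G\cap S=\emptyset$ satisfies $n\le(R_m-1)2^d$. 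Minimality yields, for each $i$, a witness $s_i\in S\cap\bigcap_{j\neq i}C_j$, and necessarily $s_i\notin C_i$ (otherwise $s_i\in\bigcap\mathcal G\cap S$).

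The factor $2^d$ comes from Doignon's parity idea \cite{Doi1973}: partition the witnesses $s_1,\dots,s_n$ according to their class in $L/2L\cong(\Z/2)^d$, giving $2^d$ classes. If $n>(R_m-1)2^d$, some class contains at least $R_m$ witnesses. Within a single class any two witnesses $s_i,s_j$ are congruent modulo $2L$, so their midpoint $w_{ij}=\tfrac12(s_i+s_j)$ is again a point of $L$, hence lies either in $S$ or in one of the sublattices $L_k$.

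The factor $R_m$ comes from Ramsey's theorem. Color each pair $\{s_i,s_j\}$ inside the chosen class by an index $k\in\{1,\dots,m\}$ with $w_{ij}\in L_k$. If this is a genuine $m$-coloring of the complete graph on at least $R_m$ vertices, Ramsey's theorem produces a monochromatic triangle $s_a,s_b,s_c$, all three of whose pairwise midpoints lie in one sublattice $L_k$. Writing $p=\tfrac12(s_a+s_b)$, $q=\tfrac12(s_a+s_c)$, and $t=\tfrac12(s_b+s_c)$, the fact that $L_k$ is a subgroup gives $t+(p-q)=s_b\in L_k$, contradicting $s_b\in S$. Hence no class can contain $R_m$ witnesses, so $n\le(R_m-1)2^d$, which is exactly $\h_S\le(R_m-1)2^d$.

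The step I expect to be the main obstacle is the claim that this pair-coloring is \emph{total}, that is, that no midpoint $w_{ij}$ of two same-class witnesses falls into $S$; only then does every edge receive a color in $\{1,\dots,m\}$ and Ramsey's theorem apply. A midpoint that happens to land in $S$ lies in $\bigcap_{l\neq i,j}C_l$ but need not lie in $C_i$ or $C_j$, so it does not by itself contradict infeasibility, and one cannot simply assign it an arbitrary color without breaking the computation $s_b\in L_k$. I would attempt to remove this case by selecting the witnesses extremally (for example as lexicographically minimal witnesses with respect to a fixed generic direction) and arguing that a midpoint of two witnesses lying in $S$ would produce a strictly smaller obstruction and hence violate minimality. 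Making this extremal selection actually force every same-class midpoint off $S$ is where the real work lies; note that it becomes \emph{harder}, not easier, when the $L_k$ have low rank, since then $\bigcup_k L_k$ is sparser and more midpoints threaten to fall back into $S$.
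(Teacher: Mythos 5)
Your parity-plus-Ramsey skeleton is the right one, and your triangle computation ($t+(p-q)=s_b\in L_k$, contradicting $s_b\in S$) is correct and is exactly the mechanism used in the cited source. But the gap you flag at the end is genuine, and the extremal-witness fix you sketch does not close it. The failure case is precisely when the midpoint $w_{ij}=\tfrac12(s_i+s_j)$ lies in $S$ but outside \emph{both} $C_i$ and $C_j$. Infeasibility of the family only tells you that $w_{ij}\notin C_i$ or $w_{ij}\notin C_j$; when both fail, $w_{ij}$ lies merely in $\bigcap_{l\neq i,j}C_l$, so it is not a valid witness for \emph{any} index of the family and cannot be substituted for $s_i$ or $s_j$. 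Consequently no generic direction or lexicographic tie-breaking makes it contradict minimality of your witness selection, and nothing in the minimal-infeasible-family formulation rules this sub-case out.

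The standard repair --- the route taken in the cited source, and the one this paper generalizes to $k\ge 1$ in Lemmas \ref{qHoffman} and \ref{qScarf} --- is to change the object you bound \emph{before} running the parity/Ramsey argument. From the witnesses one passes to the set $P=\{s_1,\dots,s_n\}$ and notes that $\conv(P\setminus\{s_i\})\subseteq C_i$, hence $\bigcap_{p\in P}\conv(P\setminus\{p\})\cap S=\emptyset$; that is, $P$ is $1$-Hoffman, giving $\h_S\le \h'_S(1)$. Then Hoffman's argument (Lemma \ref{qScarf} with $k=1$, an induction over convex-hull inclusion) shows $\h'_S(1)$ equals the largest cardinality of a \emph{hollow} set, i.e., a set $P\subseteq S$ with $\bigl(\conv(P)\setminus V(\conv(P))\bigr)\cap S=\emptyset$. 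Now run your argument on a hollow $P$ with $|P|>(R_m-1)2^d$: the $R_m$ points in one residue class mod $2L$ have all pairwise midpoints in $L$, and these midpoints are interior to segments, hence non-vertices of $\conv(P)$. If any one of them lies in $S$, then $P$ is not hollow and you are done on the spot; otherwise every midpoint lies in some $L_k$, your edge-coloring is total, and Ramsey plus the subgroup identity finishes. In this formulation the case you feared is the \emph{easy} case rather than an obstruction --- which also shows your closing remark has it backwards: a sparser $\bigcup_k L_k$ pushes more midpoints into $S$ and only makes hollow sets easier to rule out.
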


 
In \cite{ipcoversion}, Aliev, De Loera, and Louveaux first extended Doignon's theorem to provide \emph{quantitative} Helly numbers, showing their first ever upper bound. This was later improved to $\h_L (k)\le (2^d-2)\left\lceil \tfrac23(k+1)\right\rceil +2$ for $L \subset \R^d$ a lattice of rank $d$ in \cite{alievatal}. Bounds for the quantitative $\Z^d$-Helly number have since been improved in asymptotic behavior in \cite{averkov16, chestnut2015sublinear}. They also proved some interesting properties, e.g., that $\h_{\Z^d}(k)$ is, somewhat surprisingly, not monotonic with respect to $k$. Exact values  for $\h_{\Z^d}(k)$ are known for $k=1,\dots,4$. Most recently, Averkov et al.~\cite{averkov2016} have expressed the quantitative $S$-Helly number $\h_{S}(k)$ in terms of polytopes with vertices in $S$ containing exactly $k$ points of $S$ in their interior.  

Before proving our main contribution in discrete quantitative Helly numbers, Theorem \ref{quantitative-discrete-doignon}, we should note the existence of similar \emph{colorful} Helly numbers. The conditions needed for this generalization have been used by several authors e.g., \cite{AW2012,baranymatousek}, and recently summarized in \cite{queretaro}.
 
First, we need the fact that the property \emph{``having at least $k$ points of $S$''} has a finite $S$-Helly number. Second, the property of having at least $k$ points of $S$ is \emph{monotone} in the sense that if  $K\subset K'$ and $K$ has at least $k$ points from $S$, then this implies that $K'$ has also at least $k$ points of $S$ within. Finally, the property of having at least $k$ points from $S$ is \emph{orderable}, because  for any finite family $\F$ of convex sets there is a direction $v$ such that:
\begin{enumerate}
\item For every $K\in\F$ with $|K \cap S|\geq k$, there is a containment-minimal $v$-semispace (i.e. a half-space of the form $\{x:v^T x\ge 0\}$) $H$ such that $|K\cap H\cap S|\geq k$.
\item There is a unique containment-minimal $K'\subset K\cap H$ with $|K' \cap S|\geq k$.
\end{enumerate}

In our case, the work presented in \cite{queretaro} shows that every monotone and orderable property with a well-defined Helly number must be \emph{colorable}:

\begin{theorem}[Colorful discrete quantitative Helly]
\label{colorful-helly}
Let $S$ be a discrete set in $\R^d$ with finite quantitative $S$-Helly number $N=\h_S(k)$.  Suppose that $\F_1,\dots\F_N$ are finite families of closed convex sets such that $|\bigcap {\cal G} \cap S|\geq k$ for every subfamily ${\cal G}$ satisfying $|{\cal G} \cap\F_i|=1$ for every $i$.  Then $|\bigcap\F_i \cap S|\geq k$ for some $i$.
\end{theorem}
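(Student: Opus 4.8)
The plan is to derive this as a direct instance of the abstract colorability criterion of \cite{queretaro}, which guarantees that any Helly-type property that is monotone and orderable has colorful Helly number equal to its monochromatic Helly number. I would take the relevant property of a closed convex set $K$ to be the predicate ``$|K\cap S|\ge k$'' and check that it meets the three hypotheses recalled just before the statement. Two of these are essentially free: the property has finite $S$-Helly number by the standing assumption $N=\h_S(k)<\infty$, and it is monotone because $K\subseteq K'$ forces $K\cap S\subseteq K'\cap S$, so containing at least $k$ points of $S$ is preserved under enlargement. The third hypothesis, orderability, is the only one with content, and it is precisely the structure asserted in conditions (1)--(2) above.

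For orderability I would argue as follows. Given a finite family $\F$, choose the direction $v$ in general position relative to $S$, meaning that the relevant points of $S$ receive pairwise distinct values under $x\mapsto v^T x$; this is possible because $S$ is discrete, so only finitely many of its points matter for a given finite family. For each $K\in\F$ with $|K\cap S|\ge k$, pushing the threshold of the semispace $\{x:v^T x\ge b\}$ as far as possible while retaining at least $k$ points of $S$ in $K$ yields the containment-minimal semispace $H$ of condition (1). Condition (2), the uniqueness of the minimal witness $K'\subseteq K\cap H$ with $|K'\cap S|\ge k$, is then forced by genericity of $v$: the $k$ extreme points of $K\cap S$ in the $v$-direction are uniquely determined with no ties, so they pin down a single minimal witness.

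With the three hypotheses confirmed, the theorem is the colorable conclusion of \cite{queretaro} read back for this property: with $N=\h_S(k)$ color classes $\F_1,\dots,\F_N$, if every rainbow transversal $\mathcal G$ (one set from each $\F_i$) satisfies $|\bigcap\mathcal G\cap S|\ge k$, then some single class satisfies $|\bigcap\F_i\cap S|\ge k$. The abstract result supplies the passage from colorful to monochromatic internally, via an extremal exchange over the color classes ordered by $v^T x$, so no further geometric input is required once orderability is in hand.

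I expect the main obstacle to lie in the careful verification of orderability, and specifically condition (2). The delicate points are (a) ensuring a single direction $v$ simultaneously works for every member of the finite family, and (b) ruling out degenerate ties that would produce several competing minimal witnesses, including the case of several points of $S$ lying on the bounding face $\{v^T x = b\}$ of the minimal semispace. These are exactly the places where the discreteness of $S$ is indispensable; once they are settled, invoking the colorability criterion of \cite{queretaro} is routine.
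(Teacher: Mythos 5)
Your proposal is correct and takes essentially the same route as the paper: the paper also derives Theorem \ref{colorful-helly} by noting that the property ``$|K\cap S|\ge k$'' has finite $S$-Helly number, is monotone, and is orderable, and then invoking the colorability criterion of \cite{queretaro}. You actually supply more detail than the paper does on the orderability verification (the generic choice of $v$ and the uniqueness of the minimal witness), which the paper only asserts.
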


Results such as Theorem \ref{colorful-helly} are called \emph{colorful} because we may think of each $\F_i$ as a different color class, in which case the relevant subfamilies ${\cal G}$ are those with one element in every color.  As a corollary to Theorem \ref{colorful-helly}, we immediately obtain the following by applying Theorem \ref{quantitative-discrete-doignon}.

\begin{corollary}[Colorful quantitative Helly for differences of lattices] \label{aquantitative-discrete-doignon} 
Let $L$ be a lattice in $\R^d$ and let $L_1,\dots,L_m$ be $m$ sublattices of $L$.
 Let $S=L \setminus (L_1\cup\dots\cup L_m)$.  Let $N=\left(2^{m+1}k+1\right)^r$, where $r=\textnormal{rank}(L)$, and let $\F_1, \ldots, \F_N$ be finite families of closed convex sets so that $|\bigcap {\cal G} \cap S|\geq k$ for every rainbow subfamily ${\cal G}$.  Then, there is an $i$ such that $|\bigcap\F_i \cap S|\geq k$.
\end{corollary}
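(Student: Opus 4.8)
The plan is to derive this statement directly from the two quantitative Helly results already in hand, with a single monotonicity argument bridging the gap between the upper bound $N$ and the exact Helly number. First I would invoke Theorem \ref{quantitative-discrete-doignon}: since $S = L \setminus (L_1\cup\dots\cup L_m)$ is a subset of the lattice $L$, it is discrete, so its quantitative Helly number $h:=\h_S(k)$ exists and satisfies $h \le (2^{m+1}k+1)^r = N$. The remaining task is to deduce the colorful statement for $N$ color classes from Theorem \ref{colorful-helly}, which is phrased for exactly $h=\h_S(k)$ classes.

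Since $h \le N$, I would restrict attention to the first $h$ families $\F_1,\dots,\F_h$ and verify that they meet the hypothesis of Theorem \ref{colorful-helly}. (If any of the $N$ families is empty there is nothing to prove: its empty intersection equals $\R^d$, which meets $S$ in at least $k$ points, so the conclusion holds for that index; thus I may assume every $\F_i\ne\emptyset$.) Take any rainbow subfamily ${\cal G}_0$ of $\F_1,\dots,\F_h$, that is, a choice of one set from each. Extending ${\cal G}_0$ by an arbitrary choice of one set from each of $\F_{h+1},\dots,\F_N$ produces a full rainbow subfamily ${\cal G}$ of $\F_1,\dots,\F_N$, and by hypothesis $|\bigcap {\cal G}\cap S|\ge k$. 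Because $\bigcap {\cal G}_0 \supseteq \bigcap {\cal G}$, the monotonicity of the property ``having at least $k$ points of $S$'' forces $|\bigcap {\cal G}_0\cap S|\ge k$.

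Thus $\F_1,\dots,\F_h$ satisfy the hypotheses of Theorem \ref{colorful-helly} with $N$ replaced by $h=\h_S(k)$, so that theorem yields an index $i\le h$ with $|\bigcap\F_i\cap S|\ge k$, which is exactly the desired conclusion.

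The one genuinely non-cosmetic point — and where I expect the only real care is needed — is precisely this passage from the upper bound $N$ to the exact value $\h_S(k)$: the colorful theorem is stated for exactly $\h_S(k)$ classes, whereas the corollary is stated for $N\ge\h_S(k)$ classes. The truncation-plus-monotonicity step above resolves this cleanly, and every other ingredient is a direct substitution of Theorems \ref{quantitative-discrete-doignon} and \ref{colorful-helly}.
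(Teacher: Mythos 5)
Your proposal is correct and follows essentially the same route as the paper: the paper obtains this corollary immediately by feeding the bound $\h_S(k)\le\left(2^{m+1}k+1\right)^r$ from Theorem \ref{quantitative-discrete-doignon} into the colorful Helly theorem, Theorem \ref{colorful-helly}. Your truncation-plus-monotonicity argument simply makes explicit the step the paper leaves implicit in passing from exactly $\h_S(k)$ color classes to $N\ge\h_S(k)$ classes, and it is a correct way to fill that gap.
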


To prove Theorem \ref{quantitative-discrete-doignon}, we will use the following definition based on \cite{alievetal,ipcoversion}. The condition that $S$ must be discrete is necessary if the following definition is to make sense for $k>1$.

\begin{definition}
Say that a subset $P$ of $S\subset\R^d$ is \emph{$k$-Hoffman} if 
$$\left|\bigcap_{p\in P}\conv(P\setminus\{p\})\cap S\right|<k.$$
The \emph{quantitative Hoffman number} $\h'_S(k)$ of a set $S\subset \R^d$ is the largest cardinality of a $k$-Hoffman set $P\subseteq S$.
\end{definition}



To compute the $S$-Helly number when $S$ is a discrete subset of $\R^d$, it suffices to consider (finite) families of convex polytopes whose vertices are in $S$, instead of families of arbitrary convex sets. In the work of Hoffman \cite{Hoffman:1979ix} and later Averkov \cite{Ave2013}, the Helly numbers of various sets $S$ were calculated using this approach. Hoffman proved $\h_S=\h'_S(1)$, where $\h_S$ is the $S$-Helly number. Here we extend their work for $k>1$ to take into account the \emph{cardinality} of the intersections with $S$.

\begin{lemma}
\label{qHoffman} Let $S \subset \R^d$ be a discrete set. 
The quantitative Hoffman number $\h'_S(k)$ bounds the quantitative Helly number $\h_S(k)$ as follows:
$\h'_S(k)-k+1 \leq \h_S(k)\leq \h'_S(k)$. 
\end{lemma}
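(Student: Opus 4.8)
The plan is to establish the two inequalities separately, in both directions passing between finite families of convex sets and $k$-Hoffman configurations. I will use the operative form of the Helly number: $\h_S(k)\le N$ means that for every finite family $\mathcal F$, if $|\bigcap\mathcal G\cap S|\ge k$ for all subfamilies $\mathcal G$ with $|\mathcal G|\le N$, then $|\bigcap\mathcal F\cap S|\ge k$; equivalently, $\h_S(k)\le N$ iff every finite $\mathcal F$ with $|\bigcap\mathcal F\cap S|<k$ has a subfamily $\mathcal G$ with $|\mathcal G|\le N$ and $|\bigcap\mathcal G\cap S|<k$. Correspondingly, $\h_S(k)>N$ is witnessed by exhibiting one family whose total intersection meets $S$ in fewer than $k$ points while every subfamily of at most $N$ sets meets $S$ in at least $k$ points.

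For the upper bound $\h_S(k)\le\h'_S(k)$, I would take any finite family $\mathcal F$ with $|\bigcap\mathcal F\cap S|<k$ and pass to an inclusion-minimal subfamily $\mathcal G=\{C_1,\dots,C_n\}$ that still satisfies $|\bigcap\mathcal G\cap S|<k$. Minimality gives $|\bigcap_{i\ne j}C_i\cap S|\ge k$ for each $j$. Since a point of $\bigcap_{i\ne j}C_i\cap S$ lies in $C_j$ exactly when it lies in $\bigcap_i C_i\cap S$, and the latter has fewer than $k$ points, I can pick $p_j\in\bigcap_{i\ne j}C_i\cap S$ with $p_j\notin C_j$. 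The points $p_1,\dots,p_n$ are then distinct (as $p_j\notin C_j$ but $p_\ell\in C_j$ for $\ell\ne j$), so $P=\{p_1,\dots,p_n\}\subseteq S$ has $n$ elements, and convexity of $C_j$ gives the containment $\conv(P\setminus\{p_j\})\subseteq C_j$. Intersecting over $j$ yields $\bigcap_{p\in P}\conv(P\setminus\{p\})\subseteq\bigcap_i C_i$, so $P$ is $k$-Hoffman and $n\le\h'_S(k)$. Thus $\mathcal G$ is a subfamily of size at most $\h'_S(k)$ witnessing fewer than $k$ points, which is exactly what the upper bound asserts.

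For the lower bound $\h_S(k)\ge\h'_S(k)-k+1$, I would run this in reverse, starting from a $k$-Hoffman set $P\subseteq S$ of maximum size $\h'_S(k)$ and forming the family $\mathcal F_P=\{\conv(P\setminus\{p\}):p\in P\}$, indexed by $p\in P$; its members are polytopes with vertices in $S$. The defining property of $P$ gives $|\bigcap\mathcal F_P\cap S|<k$. On the other hand, deleting the members indexed by a set $Q\subseteq P$ leaves a subfamily whose intersection contains all of $Q$, since each $q\in Q$ lies in $\conv(P\setminus\{p\})$ for every $p\ne q$. Hence any subfamily of size at most $\h'_S(k)-k$ is obtained by deleting at least $k$ indices and so meets $S$ in at least $k$ points. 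Therefore no subfamily of size $\le\h'_S(k)-k$ witnesses fewer than $k$ points, forcing $\h_S(k)>\h'_S(k)-k$, the desired bound.

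The arguments are short; the one place that needs care is the selection step in the upper bound, where I must be sure each witness $p_j$ can be taken outside $C_j$. This is precisely what makes $P$ attain the full cardinality $n$ and hence ties the size of a minimal counterexample to $\h'_S(k)$, and it rests on the strict inequality $|\bigcap_i C_i\cap S|<k\le|\bigcap_{i\ne j}C_i\cap S|$ forcing some point of $\bigcap_{i\ne j}C_i\cap S$ to avoid $C_j$. The remaining manipulations are routine bookkeeping with convexity and cardinalities, and degenerate cases (e.g.\ $\h'_S(k)<k$) make the lower bound trivially true since $\h_S(k)\ge 1$.
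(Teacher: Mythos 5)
Your proof is correct and is essentially the paper's own argument: the lower bound uses the very same deletion family $\{\conv(P\setminus\{p\}) \mid p\in P\}$ built from a maximum $k$-Hoffman set, and the upper bound constructs a $k$-Hoffman set by choosing one witness point of $S$ per member of a critical family, exactly as the paper does with its points $u_i$. The only (immaterial) organizational difference is that you obtain the critical family as an inclusion-minimal bad subfamily of an arbitrary counterexample and select each $p_j\notin C_j$ directly, whereas the paper invokes a critical family of size exactly $\h_S(k)$ and selects each $u_i$ so as to avoid the other witness sets $U_j$; the underlying construction and counting are the same.
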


\begin{proof}
We begin by showing that $\h_S(k)\geq \h'_S(k)-k+1$.
To do so, let $U\subset S$ be some finite set such that $\left|\bigcap_{u\in U}\conv(U\setminus\{u\}) \cap S \right|<k.$
By the definition of $\h'_S(k)$, $|U|\leq \h'_S(k)$.

Consider the family $\mathcal F=\{\conv(U\setminus\{u\})|u\in U\}$.
By the definition of $U$, $|\bigcap\mathcal F\cap S|<k$.
Note that if $\mathcal G$ is a subfamily of $\mathcal F$ with cardinality $|U|-k$, then $\mathcal G=\{\conv(U\setminus\{u\})|u\in U\setminus U'\}$ for some $U'\subseteq U$ of cardinality $k$. Consequently,
$$U'\subseteq\bigcap_{\substack{u\in U\setminus U'}}\conv(U\setminus\{u\})\cap S=\bigcap_{G \in \mathcal{G}} G\cap S.$$
Hence the quantitative Helly number 
$\h_S(k)$ must be greater than $|U|-k$.
That is, $\h_S(k)\geq  \h'_S(k)-k+1$.\\

To prove the other inequality, let $K_1,\dots,K_{\h_S(k)}$ be convex sets such that $\left|\bigcap_{j\neq i}K_j\cap S\right|\geq k$ for all $i\in[\h_S(k)]$ (where $[m]=\{1, \dots, m\}$)
yet $\left|\bigcap_{i\in[\h_S(k)]}K_i\cap S\right|<k$. Such a family $\{K_i\}$ exists by the definition of the quantitative Helly number. Then for all indices $i\in[\h_S(k)]$, there exists $U_i\subseteq\bigcap_{j\neq i}K_j\cap S$ with $|U_i|\geq k$.

Suppose $u\in U_i\cap U_j$ (for some $i\neq j$). Then $u\in\bigcap_i K_i\cap S$, 
so there can be no more than $k-1$ such points.
Hence, for each $i\in[\h_S(k)]$, there exists $u_i\in U_i$ such that 
$u_i\notin\bigcup_{j\neq i}U_j$.
In particular, the $u_i$ are distinct. 
Define now $U=\{u_i|i\in[\h_S(k)]\}$.
Consider $\bigcap_{u\in U}\conv(U\setminus\{u\})\cap S$.
Note that $U\setminus\{u_i\} = \bigcup_{j\neq i}\{u_j\}$. Because $u_j\in K_i$ for all $j\neq i$, $U\setminus\{u_i\}\subseteq K_i$.
Therefore,
\begin{align*}
\left|\bigcap_{u\in U}\conv(U\setminus\{u\})\cap S\right| & = \left|\bigcap_{i\in[\h_S(k)]}\conv(U\setminus\{u_i\})\cap S\right| \\
& \leq \left|\bigcap_{i\in[\h_S(k)]}K_i\cap S\right| \\
& < k.
\end{align*}
 By the definition of $\h'_S(k)$, it follows that $\h_S(k)\leq \h'_S(k)$.
\end{proof}

The following notion is easier to work with directly than the Hoffman number:
\begin{definition}
A set $P\subset S$ is \emph{$k$-hollow} if 
$$\big|(\conv(P)\setminus V(\conv(P)))\cap S\big|<k,$$ where $V(K)$ is the vertex set of $K$.
\end{definition}

To relate this notion to the Hoffman number, we have the following lemma.
\begin{lemma}
\label{qScarf}
Let $S\subset\R^d$ be a discrete set. 
Then $\h'_S(k)$ is equal to the cardinality of the largest $k$-hollow set with respect to $S$.
\end{lemma}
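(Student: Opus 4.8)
The plan is to prove the two inequalities $M\le\h'_S(k)$ and $\h'_S(k)\le M$ separately, where $M$ denotes the cardinality of a largest $k$-hollow set. The first inequality is the easy one and rests on the pointwise containment
\[
\bigcap_{p\in P}\conv(P\setminus\{p\})\ \subseteq\ \conv(P)\setminus V(\conv(P)),
\]
valid for every finite $P$: the left-hand side sits inside $\conv(P)$, while a vertex $v$ of $\conv(P)$ satisfies $v\notin\conv(P\setminus\{v\})$ and hence lies in none of the sets being intersected. Intersecting with $S$ shows that every $k$-hollow set is $k$-Hoffman, so a largest $k$-hollow set is in particular a $k$-Hoffman set and $M\le\h'_S(k)$.

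For the reverse inequality I would first isolate the key identity for \emph{saturated} sets, meaning sets with $P=\conv(P)\cap S$. For such $P$ the containment above becomes an equality on $S$,
\[
\Big(\bigcap_{p\in P}\conv(P\setminus\{p\})\Big)\cap S \;=\; \big(\conv(P)\setminus V(\conv(P))\big)\cap S .
\]
Indeed, if $x\in(\conv(P)\cap S)\setminus V(\conv(P))$ and $p\in P$, then either $x\neq p$, so that $x\in P\setminus\{p\}\subseteq\conv(P\setminus\{p\})$ because $P$ is saturated, or $x=p$, so that $x\in\conv(P\setminus\{x\})$ because $x$ is not a vertex. In both cases $x$ lies in every $\conv(P\setminus\{p\})$. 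Consequently a saturated set is $k$-Hoffman if and only if it is $k$-hollow.

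Now I would take a $k$-Hoffman set $P$ with $|P|=\h'_S(k)$, chosen among all such sets so that $|\conv(P)\cap S|$ is minimal, and argue that $P$ must be saturated; the identity above then makes $P$ $k$-hollow and gives $\h'_S(k)=|P|\le M$. Suppose $P$ is not saturated and pick $y\in(\conv(P)\cap S)\setminus P$. Since $|P|$ is maximal, $P\cup\{y\}$ is not $k$-Hoffman; but if $y$ lay in $\bigcap_{p}\conv(P\setminus\{p\})$ then adjoining $y$ would leave this intersection unchanged and preserve the Hoffman property, a contradiction. Hence $y\notin\bigcap_{p}\conv(P\setminus\{p\})$, so there is a vertex $v^{*}$ of $\conv(P)$ with $y\notin\conv(P\setminus\{v^{*}\})$. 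Replacing $v^{*}$ by $y$ produces $P'=(P\setminus\{v^{*}\})\cup\{y\}$ with $|P'|=|P|$; since $v^{*}$ is an extreme point of $\conv(P)$ and $v^{*}\notin P'$, we have $\conv(P')\subsetneq\conv(P)$ and therefore $|\conv(P')\cap S|<|\conv(P)\cap S|$. If $P'$ is again $k$-Hoffman, this contradicts the minimality of $|\conv(P)\cap S|$, and the proof is complete.

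The main obstacle is precisely this last step: verifying that the swap $v^{*}\mapsto y$ preserves the Hoffman property, i.e. that $\big|\bigcap_{p\in P'}\conv(P'\setminus\{p\})\cap S\big|<k$. The natural route is to establish the containment $\bigcap_{p\in P'}\conv(P'\setminus\{p\})\subseteq\bigcap_{p\in P}\conv(P\setminus\{p\})$, which goes through cleanly once $y$ lies in the single ``ear'' $\conv(P)\setminus\conv(P\setminus\{v^{*}\})$ and in $\conv(P\setminus\{v\})$ for every other vertex $v$, since then each factor $\conv((P\setminus\{v^{*},p\})\cup\{y\})$ is contained in $\conv(P\setminus\{p\})$. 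The delicate case is when $y$ belongs to several ears simultaneously, and I expect to dispose of it by choosing $y$ to be an extreme point of $(\conv(P)\cap S)\setminus\bigcap_{p}\conv(P\setminus\{p\})$, which should guarantee that exactly one such vertex $v^{*}$ is needed and hence restore the desired containment.
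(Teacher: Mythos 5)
Your first inequality and the saturated-set observation are both correct, and your overall plan (take a maximum-cardinality $k$-Hoffman set minimizing $|\conv(P)\cap S|$ and show it must be saturated via an exchange) is genuinely different from the paper's argument. But the step you yourself flag as the main obstacle is a real gap, and your proposed fix does not close it. The containment $\bigcap_{q\in P'}\conv(P'\setminus\{q\})\subseteq\bigcap_{p\in P}\conv(P\setminus\{p\})$ needs $y$ to lie in $\conv(P\setminus\{p\})$ for \emph{every} vertex $p\neq v^*$, i.e.\ to have exactly one bad vertex, and choosing $y$ extreme does not guarantee this; if anything it works against you, since points near the boundary of $\conv(P)$ tend to lie in \emph{more} ears. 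Concretely, take $S=\Z^2$, $k=1$, and $P=\{(0,0),(2,0),(2,1),(0,1)\}$. This is a maximum-cardinality $1$-Hoffman set ($\h'_{\Z^2}(1)=4$, and the two diagonals meet at $(1,\tfrac12)\notin\Z^2$), it is not saturated, and the only available exchange points are $y=(1,0)$ and $y=(1,1)$; each of these lies outside $\conv(P\setminus\{p\})$ for \emph{two} vertices $p$ (for instance $(1,0)$ fails for both $(0,0)$ and $(2,0)$). So no choice of $y$, extreme or otherwise, makes your containment argument run on this set. (One can check by hand that the swap happens to preserve the Hoffman property here, but the new intersection is the point $(\tfrac43,\tfrac13)$, which is not contained in the old intersection $\{(1,\tfrac12)\}$ --- so any correct proof must argue about the new intersection directly rather than through your containment.) Your minimality of $|\conv(P)\cap S|$ might in principle rule out such configurations, but you never use it for that purpose, and that is exactly the missing content. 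A smaller issue: your argument also presupposes that a maximum-cardinality $k$-Hoffman set exists, i.e.\ that $\h'_S(k)<\infty$, whereas the paper's formulation covers the general case.

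It is worth seeing how the paper gets around this, because it inverts your logic. The paper proves the contrapositive statement that every set $P$ larger than the largest $k$-hollow set fails to be $k$-Hoffman, by a well-founded induction over same-cardinality subsets of $S$ ordered by inclusion of convex hulls. When it forms the swapped set $P'=\{q\}\cup P\setminus\{p\}$ (a strict predecessor), the induction hypothesis says $P'$ is \emph{not} $k$-Hoffman --- so its Hoffman intersection contains at least $k$ points of $S$ --- and these points are pulled back into $\bigcap_{p\in P}\conv(P\setminus\{p\})$ via the counting function $m(q)=\#\{p\in P: q\notin\conv(P\setminus\{p\})\}$ and an inner induction on the points of smallest $m$-value. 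In other words, the situation your argument cannot handle ($y$ lying in several ears, or the swap producing a set whose Hoffman status is unknown) is precisely the engine of the paper's proof: it exploits swapped sets being non-Hoffman, whereas your approach needs swapped sets to \emph{remain} Hoffman, which is the hard, unproven direction.
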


Lemma \ref{qScarf} is a partial generalization of Proposition 3 from \cite{Hoffman:1979ix}.

\begin{proof}
Let $P$ be a $k$-hollow subset of $S$, and note that
$$\bigcap_{p\in P}\conv(P\setminus\{p\})\subseteq\conv(P)\setminus V(\conv(P)).$$
Hence
$$\left|\bigcap_{p\in P}\conv(P\setminus\{p\})\cap S\right|\leq \big|(\conv(P)\setminus V(\conv(P)))\cap S\big| < k.$$
Thus any $k$-hollow set is also $k$-Hoffman; in particular, a maximum-cardinality $k$-hollow set is also $k$-Hoffman.

Let $h$ be the cardinality of the largest $k$-hollow set.
We will show that if $P$ is a subset of $S$ with cardinality greater than $h$, then $P$ is not $k$-Hoffman; this completes the proof.

Suppose $t>h$, and define $\mathcal P_t$ to be the family of $t$-element subsets of $S$.
Note that $\mathcal P_t$ can be partially ordered by inclusion of convex hull; i.e., $P\leq P'$ if $\conv(P)\subseteq\conv(P')$.

We will use induction.
Let $P\in\mathcal P_t$; suppose that all predecessors of $P$ have been shown to not be $k$-Hoffman (this vacuously includes the case when $P$ is minimal).
Because $|P|>h$, $|(\conv(P)\setminus V(P))\cap S|\geq k$.
For all $q\in(\conv(P)\setminus V(P))\cap S$, define $m(q)$ to be the number of elements $p\in P$ such that $q\notin(\conv(P\setminus\{p\}))$.
Let $(\conv(P)\setminus V(P))\cap S=\{q_1, q_2,\dots, q_k,\dots\}$, where $m(q_1)\leq m(q_2)\leq\cdots\leq m(q_k)\leq\cdots$.

We will show by induction that $m(q_1)=m(q_2)=\cdots m(q_k)=0$.
Let $1\leq j \leq k$.
Suppose that $m(q_i)=0$ for $i<j$ (note that this is vacuously true if $j=0$); assume for a contradiction that $m(q_j)>0$.
There then exists $p_j$ such that $q_j\notin\conv(P\setminus\{p_j\})$.
Note that $p_j$ must be a vertex of $P$, as otherwise $q_i$ would certainly be contained in $\conv(P\setminus\{p_j\})$.
It follows that the set $P'=\{q_j\}\cup P\setminus\{p_j\}$ is a strict predecessor of $P$.
By the outer induction hypothesis, $P'$ is not $k$-Hoffman.
That is, there exist at least $k$ points in $\bigcap_{p\in P'}\conv(P'\setminus\{p\})\cap S$.
Because $j\leq k$, at least one of those points, $r$, must have $m(r)>0$.
If $q_j\in\conv(P\setminus\{p\})$, then 
$$r\in\conv(P'\setminus\{p\})=\conv(\{q_j\}\cap P\setminus\{p,p_j\})\subseteq\conv(P\setminus\{p\}).$$
Furthermore, $r\in\conv(P'\setminus\{q_j\})=\conv(P\setminus\{p_j\})$.
Thus $0 < m(r) < m(q_j)$, which contradicts the fact that $m(q_j)$ is the smallest nonzero value of $m$.

Thus $m(q_1)=m(q_2)=\cdots=m(q_k)=0$.
Hence $\{q_1,q_2,\dots q_k\}\subseteq\bigcap_{p\in P}\conv(P\setminus\{p\})\cap S$, so $P$ is not $k$-Hoffman.

Because $S$ is discrete, every element of $\mathcal P_t$ has a finite number of predecessors under this ordering.
It follows by induction no element of $\mathcal P_t$ is $k$-Hoffman.
\end{proof}


Lemmas \ref{qHoffman} and \ref{qScarf} allow us to prove Theorem \ref{quantitative-discrete-doignon} by 
finding an upper bound on the largest $k$-hollow set.


\begin{proof}[Proof of Theorem \ref{quantitative-discrete-doignon}]
%
%
Let $P$ be a subset of $L\setminus\bigcup_i L_i$ with cardinality $n^r+1$, where $n=k\cdot2^{m+1}+1$.
We will show that $P$ is not $k$-hollow; this implies Theorem \ref{quantitative-discrete-doignon} by Lemma \ref{qHoffman} and the contrapositive of Lemma \ref{qScarf}.

It is a simple fact, first observed in \cite{rabinowitz}, that there must 
exist $n+1$ collinear points $z_0,z_1,\dots,z_n$ in $\conv(P) \cap L$ with $z_0,z_n\in L\setminus\bigcup_i L_i$.
Note that $z_{j}$ and $z_{j+\ell}$ cannot both be in $L_i$ if $j=0\mod\ell$; otherwise, $z_0$ would be in $L_i$.
Suppose $z_i$ is in some sublattice $L_j$ for all $i\in[\ell\cdot 2^m,(\ell+1)2^m]$.
By the above note, 
$z_{\ell\cdot 2^m+2^a}$ and $z_{\ell\cdot 2^m+2^b}$ cannot both be in the same sublattice for any $0\leq a<b\leq m$.
This is impossible, as there are $m+1$ values of $a$ and only $m$ sublattices $L_i$.
Therefore $z_i\in L\setminus\bigcup_jL_j$ for some $i\in[\ell\cdot 2^m, \ell\cdot 2^m+1,\dots,(\ell+1)\cdot2^m]$.
Since $n=k\cdot 2^{m+1}+1$, 
$$\left\{\left\{z_i\middle|i\in\left[(2\ell-1) 2^m,2\ell\cdot2^m\right]\right\}\middle|1\leq\ell\leq k\right\}$$
is a family of $k$ disjoint subsets of $\{z_1,\dots,z_{n-1}\}$, each of which contains a point in $L\setminus\bigcup_jL_j$.
It follows that $\{z_1,\dots,z_{n-1}\}$ contains at least $k$ elements of $L\setminus\bigcup_jL_j$.
None of these points can be vertices of $\conv(P)$, as they are strictly between the two endpoints.
Hence
$$\left|\left(\conv(P)\setminus V(\conv(P))\right)\cap L\setminus\bigcup_jL_j\right|\geq k.$$
That is, $P$ is not $k$-hollow.
Consequently, no $k$-hollow set $P$ can have size greater than $\left(2^{m+1}k+1\right)^r$.
Therefore $\h_S(k)\leq \h'_S(k)\leq\left(2^{m+1}k+1\right)^r$.
\end{proof}

We end by remarking that using Theorem 3 of the recent paper \cite{averkov2016} one can derive a similar result as our Theorem \ref{quantitative-discrete-doignon}.

{\section{Proof of Theorem \ref{thm:quantitative-disc-tverberg}} \label{section-Tverberg}

Recall now the classical 1907 theorem of Carath\'eodory \cite{originalCaratheodory}.
Let $S$ be any subset of $\R^d$. Then each point in the convex hull of $S$ is a convex combination of at most $d+1$ points of $S$.
Our proof of Theorem \ref{thm:quantitative-disc-tverberg} requires generalizations of Carath\'eodory's theorem.

\begin{theoremp}[Very colorful Carath\'eodory theorem, B\'ar\'any 1982 \cite{baranys-caratheodory}] \label{verycolorfulcaratheodory}
Let $X_1, X_2, \ldots, X_d \subset \R^d$ be sets, each of whose convex hulls contains $p\in \R^d$ and let $q \in \R^d$.  Then, we can choose $x_1 \in X_1, \ldots, x_d \in X_d$ such that
	\[
	p \in \conv\{x_1, x_2, \ldots, x_d, q\}.
	\] 
\end{theoremp}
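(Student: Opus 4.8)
The plan is to run B\'ar\'any's ``nearest colourful simplex'' argument, adjusted for the fact that one of the $d+1$ points, namely $q$, is fixed and may not be exchanged. First I would reduce to finite colour classes: since $p\in\conv X_i$, ordinary Carath\'eodory lets me replace each $X_i$ by at most $d+1$ of its points whose hull still contains $p$, leaving only finitely many colourful selections $\{x_1,\dots,x_d\}$. Over these I would minimise $\mathrm{dist}\bigl(p,\conv\{x_1,\dots,x_d,q\}\bigr)$. If the minimum is $0$ we are done, so suppose a minimiser $\{x_1^{*},\dots,x_d^{*}\}$ gives distance $h>0$. Let $y$ be the foot of the perpendicular from $p$ to the corresponding simplex and let $u$ be the inner normal of a hyperplane $H$ through $y$ separating the simplex from $p$, so the simplex lies in $\{\langle\,\cdot\,,u\rangle\le\langle y,u\rangle\}$ while $\langle p,u\rangle>\langle y,u\rangle$.

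The driving observation is that every colour class has a point reaching at least as far as $p$ in the direction $u$: because $p\in\conv X_i$ and $\langle p,u\rangle>\langle y,u\rangle$, at least one vertex $x_i'$ of the Carath\'eodory expansion of $p$ satisfies $\langle x_i',u\rangle\ge\langle p,u\rangle$. Now $y$ lies on the boundary of the minimising simplex, so by Carath\'eodory it is a convex combination of at most $d$ of the $d+1$ points $x_1^{*},\dots,x_d^{*},q$; hence some vertex is unused. If an \emph{unused vertex is a colour} $x_j^{*}$, I would replace it by $x_j'$: the point $y$ still lies in the new simplex (its expression never used $x_j^{*}$), and moving from $y$ a little towards $x_j'$ strictly shrinks the distance to $p$, because $x_j'$ lies strictly on the $p$-side of $H$ while $p-y$ points along the normal $u$, so $\langle x_j'-y,\,p-y\rangle>0$. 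That contradicts minimality and closes this case.

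The case I expect to be the genuine obstacle is the one in which the \emph{only} unused vertex is $q$ itself, so that $y$ lies in the relative interior of the all-colour facet $\conv\{x_1^{*},\dots,x_d^{*}\}\subseteq H$ and no colour is free to swap. To see why this configuration cannot actually be optimal, I would pass to the equivalent geometric statement. Writing $p=\sum_i\lambda_i x_i+\mu q$ shows that $p\in\conv\{x_1,\dots,x_d,q\}$ holds precisely when the colourful hull $\conv\{x_1,\dots,x_d\}$ meets the ray $R=\{q+t(p-q):t\ge 1\}$ from $q$ through $p$; and projecting $\R^{d}$ along the direction $p-q$ onto a hyperplane turns the task into an ordinary colourful Carath\'eodory problem with $d$ colour classes in dimension $d-1$ (each projected class contains the image of $p$), \emph{plus} the requirement that the colourful simplex meet the line through $p$ and $q$ on the $R$-side. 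I would therefore maximise, over colourful selections whose hull meets that line, the height along $u$ of the intersection point. A maximiser whose height still fell below that of $p$ would have $p$ strictly above the affine hull of the selected simplex; since $p\in\conv X_j$, every colour would then have a point above that hull, and substituting one such point for a used colour's vertex — chosen so as to keep the projection of $p$ covered — would lift the intersection higher, contradicting maximality. Hence the maximal height is at least that of $p$, the hull meets $R$, and $p\in\conv\{x_1,\dots,x_d,q\}$.

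Two points carry the weight of this last step and are where I would spend the real effort. First, the separating and supporting objects must meet the simplex transversally and at relative-interior points of a single facet, rather than along lower faces where the local geometry degenerates; I would secure this by a small generic perturbation of the finite configuration and a limiting argument at the end. Second, I must produce the substituted point on the correct side of the opposite ridge, so that the projection of $p$ stays inside the new simplex while the height strictly increases — and this is exactly the place where the hypothesis $p\in\conv X_j$, rather than mere nonemptiness of $X_j$, is indispensable. Everything else is the standard colourful exchange bookkeeping, together with the two invocations of plain Carath\'eodory used above.
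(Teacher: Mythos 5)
You should first know that the paper itself offers no proof of this statement: it is quoted as a black box from B\'ar\'any's 1982 paper and then used to derive Lemma 3.2. So your proposal is being measured against B\'ar\'any's original argument, not against anything in this paper. Your reductions are correct as far as they go: the finiteness reduction, the minimum-distance simplex, the exchange when the unused vertex is a colour, and the reformulation that $p\in\conv\{x_1,\dots,x_d,q\}$ (for $p\neq q$) holds iff $\conv\{x_1,\dots,x_d\}$ meets the ray from $q$ through $p$ at or beyond $p$. You also correctly identify the hard case. The problem is that your treatment of that case asserts the crux instead of proving it.

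The gap is the exchange step in the height-maximization. You claim that since each colour has a point above $\aff\{x_1^{*},\dots,x_d^{*}\}$, one can substitute such a point for a used vertex ``chosen so as to keep the projection of $p$ covered.'' Nothing guarantees such a choice exists: lying above the affine hull and keeping $\bar p$ covered are independent constraints, and they can be incompatible for \emph{every} colour at once. Take $d=2$, $p=(0,1)$, $q=(0,-5)$, $X_1=\{(-1,0),(1,2)\}$, $X_2=\{(1,0),(-1,2)\}$, and the selection $x_1^{*}=(-1,0)$, $x_2^{*}=(1,0)$. Its hull meets the line through $p,q$ at $(0,0)$, below $p$, and each colour has a point above the affine hull (the $x$-axis). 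But the high point of $X_1$ projects to $+1$, while replacing $x_1^{*}$ requires projection $\le 0$ to keep $0$ covered; the high point of $X_2$ projects to $-1$, while replacing $x_2^{*}$ requires projection $\ge 0$. So no single swap keeps coverage --- indeed no single swap raises the intersection at all; only swapping \emph{both} colours simultaneously (to $(1,2)$ and $(-1,2)$, lifting the intersection to height $2$) works. This configuration satisfies every hypothesis your exchange step actually uses (coverage, intersection strictly below $p$, a high point in each colour), so that step, as stated, is false. It happens not to be the global maximizer, but your argument never invokes maximality beyond those hypotheses, so it cannot distinguish the two. Note that this is exactly the obstruction you flagged in the minimum-distance argument (only $q$ unused, hence no free colour), reproduced one dimension down: after projecting, $\bar p$ needs all $d$ points of the selection, so again no colour is free to swap. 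Repairing this needs a genuinely new idea --- controlling simultaneous multi-colour exchanges, or B\'ar\'any's own argument --- not the perturbation and transversality bookkeeping you defer to the end.
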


We will use this result to prove the following extension.

\begin{lemma}[Colorful discrete quantitative Carath\'eodory]\label{theorem-quantitative-discrete-caratheodory}
	Let $K$ be the convex hull  of $m \ge 2$ points in $\R^d$, and $\operatorname{ex}(K)$ be the number of extreme points of $K$.  
	If $n=\operatorname{ex}(K)$ and $X_1, X_2, \ldots, X_{nd}$ are sets whose convex hulls contain $K$, then we can find $x_1 \in X_1, \ldots, x_{nd} \in X_{nd}$ such that
	\[
	K \subset \conv\{x_1, \ldots, x_{nd}\}.
	\]
	Moreover, the number of sets is optimal for the conclusion to hold.
\end{lemma}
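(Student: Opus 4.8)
The plan is to reduce the containment $K \subset \conv\{x_1,\dots,x_{nd}\}$ to covering the extreme points of $K$ one at a time, feeding each into Bár\'any's very colorful Carath\'eodory theorem (Theorem \ref{verycolorfulcaratheodory}) while using the extreme points of $K$ \emph{themselves} as the auxiliary point $q$, arranged cyclically so that these auxiliary points can be eliminated at the end.

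First I would write $K = \conv\{v_1,\dots,v_n\}$, where $v_1,\dots,v_n$ are the distinct extreme points (so $n\ge 2$), and observe that it suffices to show each $v_j$ lies in $\conv\{x_1,\dots,x_{nd}\}$. Since every $\conv(X_i)\supseteq K$, each $X_i$ contains every $v_j$ in its convex hull. I then partition the index set $\{1,\dots,nd\}$ into $n$ blocks $B_1,\dots,B_n$ of size $d$, assigning block $B_j$ to the extreme point $v_j$. Applying Theorem \ref{verycolorfulcaratheodory} to the $d$ sets $\{X_i : i\in B_j\}$ with $p=v_j$ and free point $q=v_{j+1}$ (indices taken cyclically modulo $n$) yields a choice $x_i\in X_i$ for $i\in B_j$ with
\[
v_j \in \conv\bigl(\{x_i : i\in B_j\}\cup\{v_{j+1}\}\bigr).
\]

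Next I would assemble these $n$ relations. Writing $X:=\conv\{x_1,\dots,x_{nd}\}$, each relation says $v_j=(1-\mu_j)\,y_j+\mu_j v_{j+1}$ for some $y_j\in\conv\{x_i: i\in B_j\}\subseteq X$ and $\mu_j\in[0,1]$; because the $v_j$ are distinct we have $v_j\neq v_{j+1}$, which forces $\mu_j<1$ and legitimizes solving for $y_j$. Substituting the cyclic relations into one another and telescoping gives
\[
\Bigl(1-\prod_{l=1}^{n}\mu_l\Bigr)v_1 = \sum_{j=1}^{n}\Bigl(\prod_{l<j}\mu_l\Bigr)(1-\mu_j)\,y_j,
\]
where the coefficients on the right are nonnegative and, by the identity $\bigl(\prod_{l<j}\mu_l\bigr)(1-\mu_j)=\prod_{l<j}\mu_l-\prod_{l\le j}\mu_l$, sum to exactly $1-\prod_l\mu_l$. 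Since each $\mu_l<1$ we have $\prod_l\mu_l<1$, so dividing exhibits $v_1$ as a convex combination of $y_1,\dots,y_n\in X$, hence $v_1\in X$. The same computation after cyclic relabeling gives $v_j\in X$ for all $j$, and therefore $K=\conv\{v_1,\dots,v_n\}\subseteq X$.

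For the optimality claim I would exhibit, for a suitable $K$ with $n$ extreme points, a family of only $nd-1$ sets $X_1,\dots,X_{nd-1}$ with each $\conv(X_i)\supseteq K$ for which no rainbow choice has convex hull containing $K$. The natural construction links $n$ copies of the extremal configuration witnessing that $d$ colour classes (together with one free point) are necessary in the very colorful Carath\'eodory theorem, placing them around the $n$ extreme points so that the only available ``free'' points are the extreme points of $K$ themselves; deleting one set then starves a single extreme point of a direction it needs. I expect the main obstacle to lie precisely here: unlike the upper bound, the lower bound must rule out \emph{every} rainbow selection of $nd-1$ points, not merely the cyclically chained ones, so the configuration has to be rigid enough (via a dimension or general-position argument) that no alternative selection can compensate for the missing set.
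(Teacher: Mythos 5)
Your proof of the containment half is correct, and it uses a genuinely different elimination mechanism from the paper's, though both share the same skeleton: split the $nd$ colour classes into $n$ blocks of size $d$ and apply the very colorful Carath\'eodory theorem once per extreme point. The difference is the choice of the free point $q$ and how it is disposed of afterwards. The paper normalizes $K$ to contain the origin in its relative interior, takes $q=0$ in \emph{every} block, and then removes the auxiliary point by a one-shot separation argument: if $0\notin\conv\{x_1,\dots,x_{nd}\}$, a hyperplane through $0$ leaving all $x_i$ strictly on one side would put some vertex $y_j$ of $K$ in the opposite closed halfspace, contradicting $y_j\in\conv\{0,x_{(j-1)d+1},\dots,x_{jd}\}$. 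Your cyclic choice $q=v_{j+1}$ replaces that separation step with the telescoping identity; your observation that distinctness of the extreme points forces $\mu_j<1$ is exactly the justification needed, and the same fixed selection $x_1,\dots,x_{nd}$ serves every $v_j$ by starting the cycle at a different index. Either route works; the paper's costs a relative-interior normalization, yours costs the algebra.

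The genuine gap is the optimality claim, which is part of the lemma and which you leave as an acknowledged obstacle. The resolution is simpler than the linked-configuration scheme you sketch, and it disposes of \emph{all} rainbow selections at once --- precisely the difficulty you flagged. Take all $nd-1$ colour classes to be copies of (the vertex set of) a single polytope $K'\supseteq K$ constructed so that each extreme point $y_i$ of $K$ lies in the relative interior of a facet $F_i$ of $K'$, with the facets $F_1,\dots,F_n$ pairwise vertex-disjoint (and $y_i$ positioned generically in $F_i$, e.g.\ with $F_i$ a simplex, so that no $d-1$ vertices of $F_i$ have $y_i$ in their convex hull). Since $F_i$ is a face of $K'$, any convex combination of vertices of $K'$ equal to $y_i$ must be supported on vertices lying in $F_i$, and by the genericity it must use at least $d$ of them. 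Vertex-disjointness of the $F_i$ then forces any rainbow selection whose hull contains $K$ to comprise at least $nd$ distinct points, which is impossible when only $nd-1$ classes are available. No rigidity or case analysis over selection patterns is needed; the facial structure of one well-chosen polytope does all the work.
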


We believe this result may already be known, but we have not found references to it.  
Lemma \ref{theorem-quantitative-discrete-caratheodory} is ``colorful'' in the following sense: given sets $X_1, \ldots, X_{nd}$, considered as color classes, whose convex hulls contain a set $K$ with $n$ vertices, we want to make a colorful choice $x_1 \in X_1, \ldots, x_{nd} \in X_{nd}$ such that $\conv \{x_1, \ldots, x_{nd}\}$ also contains $K$. 

\begin{proof}[Proof of Lemma \ref{theorem-quantitative-discrete-caratheodory}]
Let $K$ be a polytope with $n$ vertices $y_1,y_2,\dots,y_n$; let $X_1,X_2,\dots, X_{nd}$ be sets whose convex hulls contain $K$.
Without loss of generality, we may assume th at $K$ contains the origin in its relative interior.

Using the very colorful Carath\'eodory theorem above, 
for a each $j$ we can find $x_{(j-1)d+1}\in X_{(j-1)d+1}, \dots,x_{jd}\in X_{jd}$ such that
$$y_j\in\conv\{0, x_{(j-1)d+1},\dots,x_{jd}\}.$$
To finish the proof, it suffices to show that $0\in\conv\{x_1,\dots,x_{nd}\}$.
If this is not the case, then there must be a hyperplane separating 0 from $\conv\{x_1,\dots,x_{nd}\}$.
We may assume that the hyperplane contains 0 and leaves $\{x_1,\dots,x_{nd}\}$ in the same open halfspace.
Because $K$ contains 0 in its relative interior, there must be a vertex $y_j$ of $K$ in the other (closed) halfspace, contradicting the fact that $y_j\in\conv\{0,x_1,\dots,x_{nd}\}$.

In order to show the value $nd$ is optimal, consider a convex polytope $K'$ which has each $y_i$ in the relative interior of one of its facets, and such that the facets corresponding to $y_i$ and $y_j$ do not share vertices for all $i \neq j$.  Then, take $nd-1$ copies $X_1, X_2, \ldots, X_{nd-1}$ of $K'$.  Any colorful choice whose convex hull contains $K$ needs at least $d$ vertices for each extreme point of $K$, which is not possible.
\end{proof}




We now turn our attention to the discrete quantitative Tverberg theorem, Theorem \ref{thm:quantitative-disc-tverberg}. We will use the notion of the depth of a point inside a set to present a cleaner argument.  We say that a point $p$ has \emph{depth} at least $a$ with respect to a set $A$ if for every closed halfspace $H^+$ containing $p$, we have $|H^+\cap A|\ge a$.  We say that a set of points $P$ has \emph{depth} at least $a$ with respect to $A$ if every $p\in P$ has depth at least $a$ with respect to $A$. We will use the following lemma in the proof of Theorem \ref{thm:quantitative-disc-tverberg}.

\begin{lemma}
\label{lem:depth}
If a set of points $P$ has depth at least 1 with respect to $A$, then the convex hull of $A$ contains $P$.
\end{lemma}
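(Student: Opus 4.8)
The plan is to prove the contrapositive, pointwise. Since the hypothesis is that \emph{every} point of $P$ has depth at least $1$ with respect to $A$, it suffices to show that a single point $p$ of depth at least $1$ satisfies $p \in \conv(A)$; the conclusion $P \subseteq \conv(A)$ then follows by applying this to each point of $P$. So I would fix such a $p$ and assume, for contradiction, that $p \notin \conv(A)$.

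The core step is a single application of the separating hyperplane theorem. In the setting where this lemma is used, $A$ is a finite set (a block of a Tverberg partition), so $\conv(A)$ is a compact convex set, and $p$ lies outside it. Hence $p$ can be strictly separated from $\conv(A)$: there exist $v \in \R^d$ and $c \in \R$ with $v^T p > c$ and $v^T x < c$ for every $x \in \conv(A)$, in particular for every $x \in A$. I would then take the closed halfspace $H^+ = \{x : v^T x \ge c\}$. By construction $p \in H^+$, so $H^+$ is a closed halfspace containing $p$; yet $A \cap H^+ = \emptyset$, since every point of $A$ satisfies $v^T x < c$. Thus $|H^+ \cap A| = 0 < 1$, contradicting the assumption that $p$ has depth at least $1$ with respect to $A$. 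Therefore $p \in \conv(A)$.

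I do not expect a genuine obstacle here; the argument is a direct consequence of separation together with the definition of depth. The only point meriting care is the justification of \emph{strict} separation, which requires $\conv(A)$ to be closed. This is automatic when $A$ is finite, as it is in every application in this paper. For an arbitrary set $A$ one would instead conclude only that $p$ lies in the closure $\overline{\conv(A)}$, which is why the finiteness (or compactness) of $A$ is the relevant hypothesis to invoke when stating the clean form of the lemma.
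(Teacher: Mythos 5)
Your proof is correct and takes essentially the same approach as the paper: the paper's one-line proof also deduces a contradiction from a separating hyperplane applied to a point of $P$ lying outside $\conv(A)$. Your added care about strict separation requiring $\conv(A)$ to be closed (automatic since $A$ is finite in every application) is a reasonable refinement of the same argument, not a different route.
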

\begin{proof}
If this were not true, then there would exist some hyperplane $H$ separating $\conv(A)$ from a point $p\in P$, contradicting the definition of having depth at least one.
\end{proof}





\begin{proof}[Proof of Theorem  \ref{thm:quantitative-disc-tverberg}]
Suppose that $A\subseteq S$ contains $\h_S(k)(m-1)kd+k$ points.  We will construct an $m$-Tverberg partition of $A$. For this, consider the family of convex sets
$$\mathcal F=\left\{ \conv(B) |B\subset A,|B|=(\h_S(k)-1)(m-1)kd+k\right\}.$$

Note that for any $F\in\mathcal F$, we have $|A\setminus F|\le (m-1)kd$.  Therefore, if $\mathcal G$ is a subfamily of $\mathcal F$ with cardinality $\h_S(k)$, we must have
$$\left| A\setminus\bigcap_{G\in \mathcal G} G\right| \le \h_S(k)(m-1)kd.$$
Since there are $\h_S(k)(m-1)kd+k$ points in $A$, $\bigcap_{G \in \mathcal G} G$ must contain at least $k$ elements of $S$.  Hence, by the definition of the quantitative Helly number $\h_S(k)$, $\bigcap_{F\in \mathcal F} F$ contains at least $k$ elements of $S$. Let $P=\{p_1,p_2,\dots,p_k\}$ be $k$ of those points.

\textbf{Claim 1.} The set $P$ has depth at least $(m-1)kd+1$ with respect to $A$.

Suppose that this is not true. Then, some closed halfspace $H^+$ contains an element of $P$ and at most $(m-1)kd$ elements of $A$.  This means that there are at least $(\h_S(k)-1)(m-1)kd+k$ elements of $A$ in the complement of $H^+$. However, this means that some $F\in \mathcal{F}$ lies in the complement of $H^+$, a contradiction, since every such $F$ must contain all points of $P$.

The theorem now follows immediately from the following claim:

\textbf{Claim 2.} For each $j\le m$, we can find $j$ disjoint subsets $A_1,A_2,\dots,A_j \subset A$ such that $P\subset \conv(A_i)$ for every $i$.

We proceed by induction on $j$.  In the base case of $j=1$, Claim 1 tells us that $P$ has depth at least one with respect to $A$.  Hence, by Lemma \ref{lem:depth}, we have $P\subset \conv(A)$.  Now suppose that $j>1$.  By our inductive hypothesis, we can find $j-1$ disjoint subsets $A_1,A_2,\dots,A_{j-1}\subset A$ such that $P\subset \conv(A_i)$ for every $i$.

\textbf{Case 1.} $k\ge 2$.

Applying Lemma \ref{theorem-quantitative-discrete-caratheodory}, we may assume that $A_1,A_2,\ldots,A_{j-1}$ have cardinality at most $kd$, so the depth of $P$ is diminished by at most $kd$ if we remove $A_i$ from $A$.  It follows that the depth of $P$ is at least 1 with respect to $A\setminus \cup_{1\le i\le j-1} A_i$.  Hence, by Lemma \ref{lem:depth}, we can find $A_j\subset A$ disjoint from $A_1,A_2,\ldots,A_{j-1}$ such that $P\subset \conv(A_j)$.

\textbf{Case 2.} $k=1$.

In this case, $P=\{p\}$. By standard Carath\'eodory's theorem, we may assume that each $A_i$ (for $1\le i\le j-1$) either has cardinality less than $d+1$ or else has cardinality $d+1$ and defines a full-dimensional simplex. Notice that every halfspace containing $p$ can contain at most $d$ points of $A_i$, since $p\in \conv(A_i)$. Proceeding as in Case 1, we conclude that $p$ has depth at least 1 with respect to $A\setminus \cup_{1\le i\le j-1} A_i$.  Hence, by Lemma \ref{lem:depth}, we can find $A_j\subset A$ disjoint from $A_1,A_2,\ldots,A_{j-1}$ such that $P\subset \conv(A_j)$.

This completes our induction and proves the theorem.
\end{proof}}

\section{Future directions}

We have here defined and demonstrated the existence of quantitative Tverberg, Helly, and Carath\'eodory theorems over discrete sets.  In a related work, we will also be presenting new results on \emph{continuous} quantitative versions of these theorems.  In the continuous setting, the intersection of sets is measured not by its cardinality over a discrete set $S$, but by some continuous parameter such as the diameter or volume.

As with Helly's and Carath\'eodory's theorems (see Theorem \ref{colorful-helly} and Lemma \ref{theorem-quantitative-discrete-caratheodory}), there is the potential for colorful versions of Tverberg's theorem.  In this case, the aim is to impose additional combinatorial conditions on the resulting partition of points, while guaranteeing the existence of a partition where the convex hulls of the parts intersect. Now that the conjectured topological versions of Tverberg's theorem have been proven false \cite{frick2015counterexamples}, the following conjecture by B\'ar\'any and Larman is arguably the most important open problem surrounding Tverberg's theorem.

\begin{conjecture}[B\'ar\'any and Larman, 1992 \cite{Barany:1992tx}]\label{conjecture-colorful-Tverberg}
Let $F_1, F_2, \ldots, F_{d+1} \subset \R^d$ be sets of $m$ points each, considered as color classes.  Then, there is a colorful partition of them into sets $A_1, \ldots, A_{m}$ whose convex hulls intersect.
\end{conjecture}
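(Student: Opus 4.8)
The statement is the B\'ar\'any--Larman conjecture, a famous open problem; I can only propose the standard line of attack, the \emph{configuration space / test map} (CS/TM) scheme that underlies essentially every known Tverberg-type result, and indicate precisely where it stalls. First I would write the $m(d+1)$ points as a single configuration and record what a colorful partition is: for each color class $F_j$ it assigns a bijection of the $m$ points of $F_j$ onto the $m$ blocks $A_1,\dots,A_m$, so that each $A_i$ is a rainbow transversal of size $d+1$. The set of all such assignments, together with the convex-combination coefficients witnessing a common point, is naturally parametrized by a join of chessboard complexes, a space carrying a free action of the symmetric group $S_m$ permuting the blocks. The key reduction is that a common point of $\conv(A_1),\dots,\conv(A_m)$ exists exactly when an associated $S_m$-equivariant test map into the representation $W_m^{\oplus d}$ — with $W_m$ the standard $(m-1)$-dimensional representation of $S_m$ — takes the value $0$.

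Granting this reduction, the conjecture becomes the nonexistence of an $S_m$-equivariant map from the (highly connected) configuration space to the sphere $S(W_m^{\oplus d})$ of dimension $d(m-1)-1$. The second step is to rule out such a map by an equivariant-topology obstruction: one restricts the action to a cyclic subgroup $\Z/p \subset S_m$ and compares the Fadell--Husseini indices of the domain and the sphere (or applies a Borsuk--Ulam type theorem), so that a connectivity-versus-dimension count forces a contradiction. This is exactly the computation that goes through when $m$ is a prime, where the $\Z/m$-action is free with the cohomology one needs; these prime cases (established by Blagojevi\'c, Matschke, and Ziegler, building on earlier work of \v{Z}ivaljevi\'c and Vre\'cica) are the strongest results currently available.

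The main obstacle — the reason the conjecture remains open — is that this index computation breaks down for composite $m$: the relevant subgroup action is no longer free on the parts of the configuration space that carry the obstruction, the cohomological index collapses, and no equivariant map is obstructed. Moreover, for $m$ not a prime power, the counterexamples to the topological Tverberg conjecture \cite{frick2015counterexamples} show that no purely topological CS/TM argument can possibly succeed, so one cannot hope to push the scheme through in full generality. A complete proof would therefore require either a genuinely new equivariant obstruction that persists for composite $m$, or a combinatorial-geometric argument exploiting the \emph{affine} (not merely continuous) nature of the maps $A_i \mapsto \conv(A_i)$ — structure to which the existing topological no-go results of \cite{frick2015counterexamples} do not apply — to recover the conjecture as stated in \cite{Barany:1992tx}.
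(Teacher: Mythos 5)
This statement is the B\'ar\'any--Larman conjecture, which the paper itself presents as an \emph{open problem}: no proof is given anywhere in the text, and none is known. You correctly recognized this and did not claim a proof, so there is nothing to compare against; your proposal is an assessment of why the conjecture is open rather than an argument for it, and as such it is essentially accurate. The configuration space here is indeed a join of chessboard complexes with the test map into $W_m^{\oplus d}$, the equivariant obstruction goes through only under a primality hypothesis, and the counterexamples of \cite{frick2015counterexamples} do rule out any purely topological configuration-space/test-map proof in general, leaving only the hope of an argument that genuinely uses affineness. One factual refinement: the optimal colored Tverberg theorem of Blagojevi\'c, Matschke, and Ziegler resolves the conjecture when $m+1$ is a prime (their theorem requires the number of parts, $r=m+1$ in the reduction, to be prime), not when $m$ itself is prime; the earlier \v{Z}ivaljevi\'c--Vre\'cica results for prime and prime-power $m$ give weaker, non-optimal bounds on the size of the color classes and so do not settle the conjecture as stated. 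This detail does not change your conclusion: the statement remains open, exactly as the paper asserts.
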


Along these lines, we conjecture the following colorful discrete quantitative Tverberg theorem:

\begin{conjecture}
Let $S \subset \R^d$ be a set such that the Helly number $\h_S(k)$ is finite for all $k$.  Then, for any $m,k$ there are integers $m_1$ and $m_2$ such that the following statement holds.

Given $m_1$ families $F_1, F_2, \ldots, F_{m_1}$ families of $m_2$ points of $S$ each, considered as color classes, there are $m$ pairwise disjoint colorful sets $A_1, A_2, \ldots, A_m$ such that
\[
\bigcap_{i=1}^m \conv (A_i)
\]
contains at least $k$ points of $S$.
\end{conjecture}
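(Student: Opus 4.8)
The plan is to lift the proof of Theorem~\ref{thm:quantitative-disc-tverberg} to the colorful setting, substituting each of its three ingredients by the colorful analogue already assembled in this paper: the colorful quantitative Helly theorem (Theorem~\ref{colorful-helly}) to produce a deep set of $k$ points of $S$, the colorful quantitative Carath\'eodory lemma (Lemma~\ref{theorem-quantitative-discrete-caratheodory}) to extract the parts, and the depth bookkeeping of Lemma~\ref{lem:depth} as the invariant that lets the extraction run for $m$ rounds. As there, the whole argument hinges on producing, inside the common region of enough color classes, a set $P=\{p_1,\dots,p_k\}\subseteq S$ that is deep enough to survive the removal of $m-1$ points.

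I would begin with the extraction step, which is the clean half. Suppose we already have $k$ points $P\subseteq S$ together with $kd$ color classes, each of size at least $m$, such that $P$ has depth at least $m$ with respect to each of those classes (so that $P\subseteq\conv(F_\ell)$ for each such $F_\ell$). Applying Lemma~\ref{theorem-quantitative-discrete-caratheodory} with $K=\conv(P)$ --- whose number of extreme points $n=\operatorname{ex}(K)$ is at most $k$ --- to $nd\le kd$ of these classes yields a rainbow set $A_1$, one point from each class, with $\conv(A_1)\supseteq\conv(P)\supseteq P$ and $|A_1|\le kd$. Each subsequent part removes at most one point from each class, so after $j-1$ rounds every class has lost at most $m-1$ points; since $P$ had depth at least $m$, each class still contains $P$ in its convex hull, and Lemma~\ref{theorem-quantitative-discrete-caratheodory} applies again to produce the next rainbow set $A_j$. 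Iterating $m$ times gives pairwise disjoint rainbow sets $A_1,\dots,A_m$ with $\bigcap_{i}\conv(A_i)\supseteq P$, hence at least $k$ points of $S$ in the intersection.

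It remains to produce such a $P$, and this is where I would invoke the colorful Helly machinery. Setting $N=\h_S(k)$, I would take $m_1$ of order $N\cdot kd$, grouped into $N$ blocks, and for each block form a family $\F_i$ consisting of the depth-$\ge m$ regions $D_\ell=\bigcap_{|B|=m-1}\conv(F_\ell\setminus B)$ of its classes; choosing $m_2$ large should force each such region to contain at least $k$ points of $S$ by the counting argument used for Claim~1 in the proof of Theorem~\ref{thm:quantitative-disc-tverberg}. If every rainbow selection of one region per block had at least $k$ common points of $S$, then Theorem~\ref{colorful-helly} would hand us a single block all of whose $kd$ classes share a common $k$-point subset of $S$ lying deep in each, which is exactly the input the extraction step needs.

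The hard part is precisely the hypothesis of that last Helly application: for arbitrary color classes there is no reason a rainbow selection of deep regions should meet $S$ at all, since the classes may be mutually far apart and the common point of a Tverberg partition is typically a \emph{bridge} point lying in the convex hull of \emph{no} individual class. The containment-based Carath\'eodory results of this paper (Theorem~\ref{verycolorfulcaratheodory} and Lemma~\ref{theorem-quantitative-discrete-caratheodory}) cannot reach such bridge points directly, so the naive mirror breaks exactly here. Overcoming this is the crux, and I expect it to require a Sarkaria-type tensor lifting that converts the bridging into a genuine containment problem in dimension $d(m-1)$, where the colorful Carath\'eodory lemma does apply; the real difficulty then becomes quantitative, namely controlling the number of points of $S$ through the lift and its descent back to $\R^d$, and pinning down the resulting values of $m_1$ and $m_2$.
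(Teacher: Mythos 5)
You should first note that the paper contains no proof of this statement: it appears in the ``Future directions'' section as an open conjecture, so there is nothing of the authors' to compare your argument against, and the only question is whether your proposal settles the problem. It does not, and the failure is exactly at the point you flag yourself. Your extraction step is sound: if one had $k$ points $P\subseteq S$ of depth at least $m$ with respect to each of $kd$ color classes, then Lemma~\ref{theorem-quantitative-discrete-caratheodory} together with Lemma~\ref{lem:depth} would produce $m$ pairwise disjoint rainbow sets whose hulls all contain $P$, mirroring the proof of Theorem~\ref{thm:quantitative-disc-tverberg}. But the input to that step is unobtainable in general. Your intermediate target --- a single $k$-point subset of $S$ lying deep in many color classes \emph{simultaneously} --- is strictly stronger than the conjecture's conclusion and is outright false for some configurations: take color classes whose convex hulls are pairwise disjoint. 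Already in $\R^1$ with $F_1=\{0,1\}$ and $F_2=\{2,3\}$, the rainbow hulls $\conv\{0,3\}$ and $\conv\{1,2\}$ intersect in $[1,2]$, whose interior meets neither $\conv(F_1)$ nor $\conv(F_2)$; the desired intersection points are ``bridge'' points. For such configurations every deep region $D_\ell$ lies inside $\conv(F_\ell)$, so the rainbow intersections of deep regions are empty of $S$ no matter how large $m_1$ and $m_2$ are chosen, the hypothesis of Theorem~\ref{colorful-helly} can never be arranged, and your pipeline produces nothing --- even though the conjecture's conclusion may well still hold there.

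What remains in the proposal is the suggestion of a Sarkaria-type tensor lift to $\R^{d(m-1)}$, but this is named rather than executed, and it is precisely where the open difficulty lives. The lift converts the existence of a Tverberg partition into a colorful Carath\'eodory statement about containing the origin, but it certifies a single common \emph{real} point; there is no identified discrete set in the lifted space whose quantitative Helly or Carath\'eodory numbers are finite and whose points descend to points of $S$ inside $\bigcap_i\conv(A_i)$, so the counting over $S$ --- the whole content of the quantitative statement --- does not transport through the lift and back. In short, you have correctly reduced the conjecture to (and located) a genuinely open obstruction, but you have not closed it; the statement remains a conjecture after your argument, exactly as it stands in the paper.
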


\section*{Acknowledgments} We are grateful to I.~B\'ar\'any, A.~Barvinok, F.~Frick, A.~Holmsen, J.~Pach, and G.M.~Ziegler for their comments and suggestions. This work was partially supported by the Institute for Mathematics and its Applications (IMA) in Minneapolis, MN funded by the National Science Foundation (NSF). The authors are grateful for the wonderful working environment that led to this paper. The research of De Loera and La Haye was also supported first by a UC MEXUS grant and later by an NSA grant. Rolnick was additionally supported by NSF grants DMS-1321794 and 1122374.

\bibliographystyle{plain}

\bibliography{references}

\vskip .7in

\noindent J.A. De Loera and R.N. La Haye\\
\textsc{
Department of Mathematics \\
University of California, Davis \\
Davis, CA 95616
 \\
}\\[0.3cm]
\noindent D. Rolnick \\
\textsc{
 Department of Mathematics \\
 Massachusetts Institute of Technology \\
 Cambridge, MA 02139
 \\
}\\[0.3cm]
\noindent P. Sober\'on \\
\textsc{
Department of Mathematics \\
Northeastern University \\
Boston, MA 02115
}\\[0.3cm]

\noindent \textit{E-mail addresses: }\texttt{deloera@math.ucdavis.edu, rlahaye@math.ucdavis.edu, drolnick@math.mit.edu, p.soberonbravo@neu.edu}
\end{document}